\newtheorem{thm}{Theorem}
\newtheorem{cor}{Corollary}
\newtheorem{lem}{Lemma}
\theoremstyle{definition}
\theoremstyle{remark}
\numberwithin{equation}{section}
\newcommand{\dmn}{\mathop{\rm dom}}
\newcommand{\supp}{\mathop{\rm supp}}
\renewcommand{\kappa}{\varkappa}
\newcommand{\Real}{\mathbb R}
\newcommand{\eps}{\varepsilon}
\newcommand{\cH}{\mathcal{H}}
\newcommand{\cR}{\mathcal{R}}
\begin{document}

\begingroup
\renewcommand\thefootnote{}\footnotetext{$^{*}$Corresponding author}
\endgroup

\title[Transmission resonances in scattering by $\delta'$-like combs]{Transmission resonances in scattering\\ by $\delta'$-like combs}
\author{Yuriy Golovaty} \address[YG]{Ivan Franko National University of Lviv,
	1 Universytetska st., 79602 Lviv, Ukraine}%
\email{yuriy.golovaty@lnu.edu.ua}
\author{Rostyslav Hryniv$^{*}$}%
\address[RH]{
	Ukrainian Catholic University, 2a Kozelnytska str., 79026, Lviv, Ukraine \and
	University of Rzesz\'{o}w, 1 Pigonia str., 35-310 Rzesz\'{o}w, Poland}%
\email{rhryniv@ucu.edu.ua, rhryniv@ur.edu.pl}

\author{Stanislav Lavrynenko}%
\address[SL]{Ivan Franko National University of Lviv,
	1 Universytetska st., 79602 Lviv, Ukraine}
\email{stanislav.lavrynenko@lnu.edu.ua}%

\subjclass[2020]{81U15, 47A40, 34L25}%

\keywords{Exactly solvable models, quantum mechanics, point interactions, $\delta'$-potentials, particle scattering, transmission probability}

\date{1 July 2025}%
\begin{abstract}
    We introduce a new exactly solvable model in quantum mechanics that describes the propagation of particles through a potential field created by regularly spaced $\delta'$-type point interactions, which model the localized dipoles often observed in crystal structures. We refer to the corresponding potentials as $\delta'_\theta$-combs, where the parameter $\theta$ represents the contrast of the resonant wave at zero energy and determines the interface conditions in the Hamiltonians. We explicitly calculate the scattering matrix for these systems and prove that the transmission probability exhibits sharp resonance peaks while rapidly decaying at other frequencies. Consequently, Hamiltonians with $\delta'_\theta$-comb potentials act as quantum filters, permitting tunnelling only for specific wave frequencies. 
    Furthermore, for each $\theta > 0$, we construct a family of regularized Hamiltonians approximating the ideal model and prove that their transmission probabilities have a similar structure, thereby confirming the physical realizability of the band-pass filtering effect.
\end{abstract}
\maketitle
\section{Introduction}

In this paper, we study quantum tunnelling effects in Hamiltonians with potentials that we call $\delta_\theta'$-combs, and the principal results are illustrated in Fig.~\ref{FigStart}. These potentials serve as exactly solvable models for electron transport in crystal structures and consist of multiple equally spaced, scale-invariant point interactions naturally associated with $\delta'$-distributions, i.e., the distributional derivatives of Dirac's delta function. Fig.~\ref{FigStart} illustrates the dependence of the transmission probability~$T$ of a quantum particle on its wave frequency~$k$; notably, $\delta'_\theta$-combs produce sharp quantum filters permitting particle transmission only within specific narrow energy bands while suppressing tunnelling at all other energies.

\begin{figure}[!h]
  \centering
  \includegraphics[scale=.38]{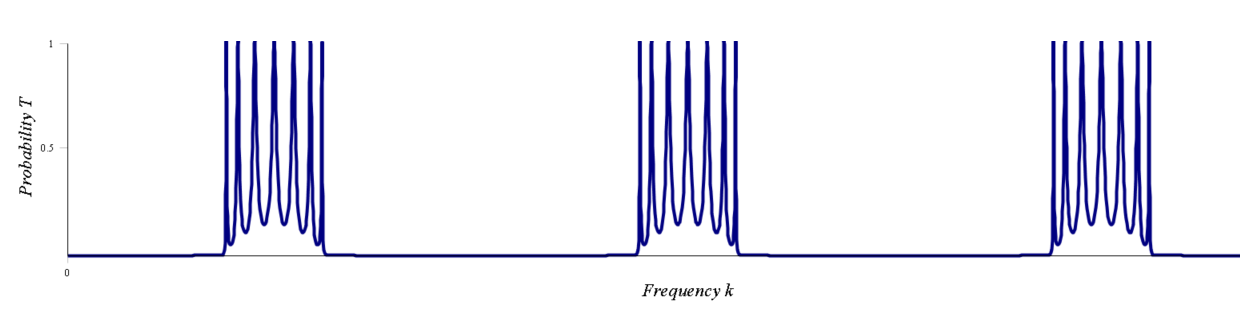}\\
  \caption{Peaks in the probability of tunneling through the $\delta'_\theta$-comb.}\label{FigStart}
\end{figure}

The scale-invariant point interactions in the constructed $\delta_\theta'$-combs effectively model non-vanishing dipole moments caused by the asymmetric distribution of po\-si\-tive and negative charges in certain crystal structures. Such dipole moments lead to important physical properties, including spontaneous polarization, piezoelectricity, pyroelectricity, and ferroelectricity. For instance, Fig.~\ref{FigZnO} illustrates the crystallization of zinc and oxygen into a wurtzite lattice~\cite{ZnO}, typical of binary compounds such as ZnO. The structure consists of two interpenetrating hexagonal close-packed sublattices---one for each type of atom (Zn and O)---and the relative displacements of these sublattices induce local electric dipole moments.  

Particle transport along the $x$-axis in such media involves scattering from a sequence of these dipole-induced potentials, which can be effectively modeled by \mbox{$\delta'$-like} point interactions. Moreover, under certain conditions, hexagonal graphite-like layers can be extracted from such crystals, and these may then roll into cylindrical nanotubes~\cite{ZnO}. The transport of particles through such nanotubes is effectively one-di\-men\-si\-o\-nal, and the influence of local asymmetry-caused dipoles can be approximated by a finite array of equally spaced $\delta'$-like fields. This motivates the study of the scattering problem for periodic $\delta'_\theta$-combs, which serve as an analytically tractable model for electron transport in such nanostructures.

\begin{figure}[t]
  \centering
  \includegraphics[scale=.3]{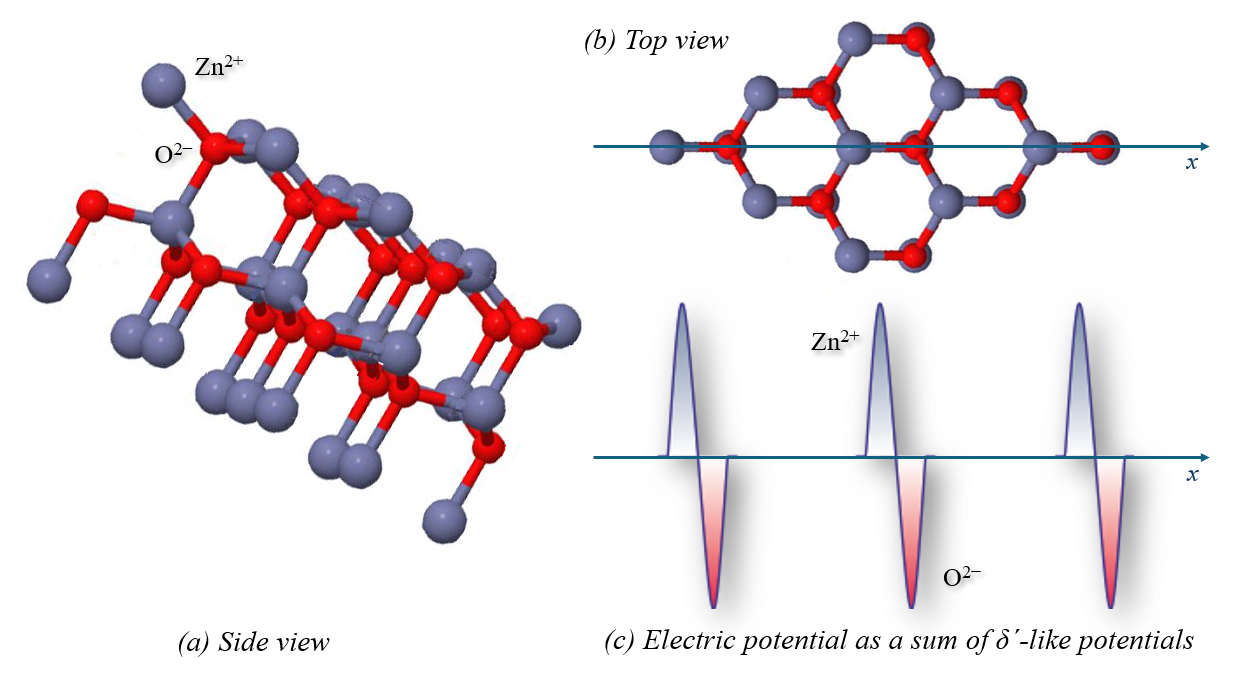}\\
  \caption{Crystal structure of zinc oxide ZnO with non-vanishing local dipole moments.}\label{FigZnO}
\end{figure}

To further explain the motivation and set up the notation, we start with the case of a single $\delta_\theta'$-potential. We denote by $S_\theta$ the free Schr\"{o}dinger operator with point interaction at $x=a$ acting on the functions in $W_2^2(\mathbb{R}\setminus\{a\})$ subject to the interface conditions
\begin{equation}\label{PointInteractionX0}
  \psi(a+0) = \theta \psi(a-0), \quad \psi'(a+0) = \theta^{-1} \psi'(a-0)
\end{equation}
for some $\theta\in\Real\setminus\{0\}$. As we explain below, $S_\theta$ can be obtained as the norm resolvent limit of the regular Schr\"odinger operators with scaled $\delta'$-type resonant potentials, and the parameter $\theta$ is determined by the corresponding half-bound state. For that reason, we call the operator $S_\theta$ the \textit{Schr\"odinger operator with $\delta'_\theta$ potential} and denote it
\begin{equation}\label{Stheta}
        S_\theta = - \frac{d^2}{dx^2} + \delta'_\theta(x-a). 
\end{equation}

From the point of view of quantum mechanics, the $\delta'$-type potentials can be thought of as an idealized model for localized dipoles, i.e., narrow sharp potentials combining a high potential barrier followed by a deep potential well as in Fig.~\ref{FigZnO}\,(c).
A Hamiltonian with such an idealized dipole can be modelled as the limit of the Schr\"odinger operators
\begin{equation}\label{eq:H_eps}
    H_\eps=-\frac{d^2}{dx^2}+\eps^{-2}V(\eps^{-1}(x-a))
\end{equation}
as $\eps\to0$, where $V$ is an arbitrary short-range potential. Indeed, if $\int_{\Real}V(x)\,dx=0$ and $\int_{\Real}xV(x)\,dx=-1$, then the potentials $\eps^{-2}V(\eps^{-1}(x-a))$ converge to $\delta'(x-a)$ in the sense of distributions as $\eps$ tends to $0$.

The arguments of~\cite{SebRMP} demonstrated that the operators $H_\eps$ should converge to the direct sum $H_\mathrm{D}$ of two free half-line Schr\"odinger operators subject to the Dirichlet condition at $x=a$. From the quantum particle scattering perspective, that result would mean that the resulting $\delta'$-type potentials should split the quantum system into two non-interacting parts, i.e., that a localized dipole should be impenetrable to quantum particles. However, a later study \cite{ChristianZolotarIermak03} suggested an example contradicting that property and described resonances for the transmission probability through a piecewise constant $\delta'$-like potential. More generally, it was proved in~\cite{GolovatyHryniv2010, GolovatyHrynivProcEdinburgh2013} that for an arbitrary real-valued potential $V$ of the Marchenko class~$L_2(\Real,1+|x|)$ the operators~$H_\eps$ converge in the norm resolvent topology as $\eps\to0$, and that the limit is~$H_\mathrm{D}$ if~$V$ is non-resonant and $S_\theta$ if~$V$ is resonant.

Here, we call a potential~$V$ \textit{resonant}, or having a \textit{zero-energy resonance} if the equation $-u''+Vu=0$ has a solution \textit{(half-bound state)} $u$ that is bounded on the whole line. This half-bound state has then non-zero limits $u(\pm\infty)$ at both infinities, and the parameter~$\theta$ represents the contrast of~$u$ and is equal to the ratio of these limits,
\begin{equation}\label{Theta}
  \theta=\frac{u(+\infty)}{u(-\infty)}.
\end{equation}
This fact justifies the identification of~$S_\theta$ as the Schr\"odinger operator with potential~$\delta'_\theta(x-a)$, where the subscript~$\theta$ refers to the interface conditions~\eqref{PointInteractionX0}.
We should stress, however, that there is no explicit dependence of $\theta$ on the shape~$V$ in the family of potentials~$V_\eps(x) = \eps^{-2}V(\eps^{-1}(x-a))$: the Hamiltonian $H_\eps$ may converge to some $S_\theta$ even if  $V_\eps$ have no limit in the distributional sense, and two resonant families of $V_\eps$ converging to the same distribution $\beta\delta'$ may correspond to different limiting Hamiltonians $S_\theta$.

In \cite{GolovatyManko2009}, the authors studied the scattering properties of the family $H_\eps$ as $\eps\to0$. It was proved therein that the transmission probability $T_\eps(k)$ of $H_\eps$ vanishes as $\eps\to 0$ if $V$ is non-resonant (in agreement with~\cite{SebRMP}); however, if $V$ has a zero-energy resonance with a half-bound state $u$, then this probability converges to the non-zero value
\begin{equation}\label{T1}
  T(k)=\frac{4\theta^2}{(\theta^2+1)^2}
\end{equation}
for all energies $E=k^2$, with $\theta$ given by \eqref{Theta}.
It is worth noting that resonant potentials are not uncommon. For example, for any integrable function $V$ that vanishes sufficiently quickly at infinity, there is a countable set of constants $c_n$ such that the potentials $c_n V$ have a zero-energy resonance (see \cite{GolovatyManko2009} for details).

The main results of this paper reveal striking properties of the transmission probability $T_n(\theta,k)$ for the Hamiltonian 
\begin{equation*}
    \mathcal{H} = -\frac{d^2}{dx^2}+ \sum_{j=0}^{n-1} \delta'_{\theta}(x - jh),
\end{equation*}
whose potential is a $\delta'_\theta$-comb consisting of $n$ point interactions~$\delta'_\theta$ placed at $a_j = jh$, $j=0,1,\dots,n-1$.
In Section~\ref{sec:transmission-resonances}, we demonstrate that this Hamiltonian defines a new exactly solvable quantum mechanical model, in the sense that its scattering coefficients and transmission probability can be expressed explicitly as analytic functions of the interface para\-me\-ter~$\theta$ and the wave frequency~$k$. 
It turns out that this model exhibits strong band-pass filtering and tunnelling effects through potentials with structured point interactions. 
Namely, for each $\theta$ not equal to $0$ or $1$, we prove that $T_n$ is $\pi$-periodic with respect to $k$ and possesses exactly $n-1$ transmission resonances over the range $k \in (0,\pi)$, at which $T_n=1$. These resonances are explicitly determined in terms of $n$ and $\theta$ and are shown to converge to the midpoint $k=\pi/2$ as $\theta\to0$. Moreover, for values of $k$ not equal to $\pi/2$, the transmission probability $T_n(\theta,k)$ decays as $O(\theta^{2n})$, demonstrating that the $\delta'_\theta$-comb functions as a highly efficient band-pass frequency filter, cf.~Fig.~\ref{FigStart}.

To the best of our knowledge, no exactly solvable quantum mechanical models have been reported that exhibit the same combination of transmission tunnelling and filtering properties as the one proposed here. The classic Kronig--Penney model~\cite{KronigPenney}, introduced in 1931 to describe electron transport in periodic crystals, employed a periodic array of local potentials, which converge to a Dirac $\delta$ function in the limit. This was seemingly among the first non-trivial exactly solvable models of quantum mechanics. A rigorous mathematical treatment of such models, including an extensive review of related results up to the early 2000s, can be found in~\cite{AGHH2005,AK2000}. These works systematically analyze Schrödinger operators with point interactions, particularly of $\delta$- and $\delta'$-type. It is important to note that the $\delta'$-interactions studied in these references differ from the $\delta'_\theta$-potentials considered in this paper. Notably, the work~\cite{Mostafazadeh2011} offers a thorough analysis of spectral singularities occurring in the general four-parameter family of complex point interactions, with particular attention to $\mathcal{P}$-, $\mathcal{T}$-, and $\mathcal{PT}$-symmetric cases, as well as the coalescence of singularities and bound states.

For models involving multiple regularly spaced $\delta$-potentials---often referred to as Dirac $\delta$-combs---the transmission probability was explicitly derived as early as 1974 in~\cite{Rorres1974}, and many similar results have been reported since (see, e.g.,~\cite{Kiang1974,Cvetic1981,Erdos1982,Martin2014,Erman2018}). A systematic treatment of wave propagation in structured periodic media, where a localized potential is repeated $n$ times, is presented in~\cite{Griffiths01}. That work derives fundamental relations for scattering coefficients, and, in particular, discusses the case of electromagnetic scattering by $\delta$-comb potentials and demonstrates the characteristic behaviour of the transmission probability in the Kronig--Penney limiting case~\cite{KronigPenney} as $n\to\infty$.

Recently, in~\cite{Zol25}, the authors investigated the periodic behaviour of transmission probability for a square quantum well. However, this periodicity was not in terms of energy $E = k^2$ or wave frequency $k$, but rather with respect to the thickness of the quantum well. For a fixed energy $E = k_0^2$, the potential shape $V$ in~\eqref{eq:H_eps} was designed so that, for all small~$\eps$, the transmission probability $T_\varepsilon(k)$ has a peak at $k=k_0$. The authors further proposed placing this quantum well outside the background potential to function as a wave filter at a specific frequency~$k_0$.

In contrast, the model introduced in this paper provides a broader and more adaptable filtering mechanism, acting as a band-pass filter over a range of frequencies rather than a single resonance peak. This distinction makes our model significantly more versatile for applications where selective transmission over multiple energy bands is required. Moreover, as demonstrated in Section~\ref{sec:example}, Hamiltonians~\eqref{Heps} with appropriately regularized $\delta'$-type locally periodic potentials, which approximate~$\cH$ in the limit $\eps\to0$, also exhibit similar transmission resonances and filtering properties.

The rest of the paper is organized as follows. In Section~\ref{sec:setting}, we introduce Hamiltonians with $\delta'_\theta$-comb potentials and discuss the associated scattering solutions and scattering coefficients. Section~\ref{sec:transfer-matrix} derives the transfer matrix~$M_n(\theta,k)$ and establishes general properties of the transmission probability that follow directly from the structure of~$M_n(\theta,k)$.  In Section~\ref{sec:transmission-resonances}, we present our main analytical results: we first compute $M_n(\theta,k)$ for small values of $n$ ($n=2,3,4$) and analyze the corresponding transmission probabilities $T_n(\theta,k)$. We then derive closed-form expressions for $M_n(\theta,k)$ and $T_n(\theta,k)$ for arbitrary $n$ and demonstrate two key effects---the tunnelling phenomenon ($T_n = 1$) and the narrow-band filtering behavior of $\delta'_\theta$-comb potentials. 
In Section~\ref{sec:example}, we construct, for every $\theta>0$, a resonant potential~$V$ with this parameter $\theta$ for the half-bound state, and then prove that the transmission probabilities for the scaled Hamiltonian~$\cH_\eps$ in~\eqref{Heps} converge to those of $\cH$. This confirms that the filtering and tunnelling effects persist in these physically realizable approximations. 


\section{Setting of the problem}
\label{sec:setting}

Let us consider the Hamiltonian
\begin{equation}\label{Heps}
   \cH_\eps=-\frac{d^2}{dx^2}+\frac1{\eps^2}\sum_{j=0}^{n-1}V\left(\frac{x-jh}\eps\right),
\end{equation}
where $V$ is an integrable function of compact support, $h>0$ is the spacing between interaction centers, and $\eps>0$ is a small scaling parameter. Such a Hamil\-to\-nian describes the interaction of particles with localized potentials concentrated near the points of the set $X_h=\{jh\}_{j=0}^{n-1}$; for small $\eps$, these potentials mimick $\delta'$-type dipole behaviour.

Assume that $V$ exhibits a zero-energy resonance (in the sense explained in the Introduction), with associated parameter $\theta$ defined via~\eqref{Theta}. The results of~\cite{GolovatyHryniv2010} imply that, as 
$\eps\to0$, the operators $\cH_\eps$ converge in the norm resolvent sense to the limiting operator formally expressed as 
\begin{equation}\label{OperatorH0} 
    \cH = -\frac{d^2}{dx^2} + \sum_{j=0}^{n-1} \delta'_{\theta}(x - jh).
\end{equation} 
Here, each point interaction~$\delta'_\theta(x - jh)$ imposes an interface condition of the form~\eqref{PointInteractionX0} at $a= jh$.
More precisely, the limiting operator~$\cH$ acts as
\begin{equation}\label{eq:cHdef}
    \cH\psi=-\psi''    
\end{equation}
on the domain
\begin{equation}\label{eq:cHinterface}
\begin{aligned}
  \dmn \cH =\big\{\psi\in W_2^2(\Real\setminus X_h)\colon  &\psi(x+0) = \theta \psi(x-0), \\ &\psi'(x+0) = \theta^{-1} \psi'(x-0)\text{ for all } x\in X_h\big\}.
\end{aligned}
\end{equation}
We refer to~\eqref{OperatorH0} as the Hamiltonian with a $\delta'_\theta$-\textit{comb potential}. The main aim of this paper is to analyze the scattering properties of such Hamiltonians and to demonstrate the unusual behaviour of their transmission probability functions.

For $k\in\Real$, we consider a left scattering solution $\psi_-$ of $\cH\psi=k^2\psi$ such that
\begin{equation*}
 \psi_-(x)  = \begin{cases}
     e^{ikx}+r_-(k)e^{-ikx} \quad & \text{for }x<0,\\
     t_-(k)e^{ikx} \quad &\text{for }x>(n-1)h.
 \end{cases}
\end{equation*}
The coefficients $r_-(k)$ and $t_-(k)$ are called the \textit{left reflection} and \textit{transmission amplitudes} respectively and are unknown here. The solution $\psi_-$ describes the scattering of an incoming flux of particles moving left to right. If the particles are incident from the right, the scattering solution is then
\begin{equation*}
  \psi_+(x)  = \begin{cases}
     t_+(k)e^{ikx} \quad & \text{for }x<0,\\
     e^{ikx}+r_+(k)e^{-ikx} \quad &\text{for }x>(n-1)h
     \end{cases}
\end{equation*}
with the \textit{right reflection} coefficient $r_+(k)$ and \textit{right transmission} coefficient $t_+(k)$. It is known that $t_-(k)=t_+(k)$, $|r_-(k)| = |r_+(k)|$, and $|t_\pm(k)|^2 + |r_\pm(k)|^2 = 1$ for all $k\in \Real$. Our main goal is to study the \textit{transmission probability} $T(k)=|t_-(k)|^2$.

Note that it is sufficient to study this model only for the case when the point interactions in the $\delta'_\theta$-comb are arranged with the step size~$h=1$. In fact, changing the variable $x$ to $h^{-1}x$ preserves the scale-invariant $\delta'_\theta$ interface conditions~\eqref{PointInteractionX0} but positions them at points of the set $X_1$; simultaneously, the energy $k^2$ scales to $h^{-2}k^2$, and the Schr\"{o}dinger equation becomes $\cH\psi=h^{-2}k^2\psi$. As a result, scaling the coordinates leads to scaling the energy but does not change the model itself.

Therefore, for given $n\in\mathbb{N}$ and $\theta\in\Real\setminus\{0\}$, we consider the problem
\begin{gather}\label{ScattProblemEq}
  \psi''+k^2\psi=0,\qquad k\in\Real, \quad x\in\Real\setminus\{0,\dots,n-1\}, \\\label{ScattProblemPI}
  \psi(j+0)=\theta\psi(j-0), \quad\psi'(j+0)=\theta^{-1}\psi'(j-0),\quad j=0,\dots,n-1,
\end{gather}
corresponding to~\eqref{OperatorH0} with $h=1$, and we look for the left scattering solution
\begin{equation}\label{ScattProblAsm}
   \psi_-(x)  = \begin{cases}
     e^{ikx}+r_n(k)e^{-ikx} \quad & \text{for }x<0,\\
     t_n(k)e^{ikx} \quad &\text{for }x>(n-1)h.
     \end{cases}
\end{equation}
We simplify the notations for the left reflection and transmission coefficients by omitting the minus sign in the subscript but instead explicitly indicate the number~$n$ of the interactions in the $\delta'_\theta$-comb. Our aim is to study the dependence of the transmission  probability
\begin{equation*}
  T_n(\theta,k)=|t_n(\theta,k)|^2
\end{equation*}
on $\theta$, $k$, and $n$. 
Note that the probability $T_1(\theta,k)$ is given by \eqref{T1} and is independent of~$k$; also, the point interactions \eqref{ScattProblemPI} are trivial for $\theta=1$, so $T_n(1,k)=1$ for all $k$.

\section{Properties of the transmission probability}
\label{sec:transfer-matrix}
Quantum mechanical tunnelling through the $\delta'_\theta$-comb is typically analyzed using the transfer matrix method. A comprehensive and rigorous treatment of transfer matrices, including the derivation of their main properties such as the composition rule, can be found in the review~\cite{Mostafazadeh2020}. In our setting, all computations are relatively straightforward and explicit, so we present the construction in full detail for completeness.

Each solution of the equation $\psi''+k^2\psi=0$ on the set $\Real\setminus\{a\}$ can be written as
\begin{equation*}
  \psi(x)=
  \begin{cases}
    \alpha_1e^{ikx}+\alpha_2e^{-ikx}&\text{for }x<a,\\
    \beta_1e^{ikx}+\beta_2e^{-ikx}&\text{for }x>a,
  \end{cases}
\end{equation*}
where $\vec{\alpha}=[\alpha_1,\alpha_2]^\top$  and $\vec{\beta}=[\beta_1,\beta_2]^\top$ are complex vectors.
If $\psi$ satisfies the interface conditions~\eqref{PointInteractionX0}, then $\vec{\alpha}$  and $\vec{\beta}$ are related by the linear transformation $\vec{\beta}=M(\theta, ak)\vec{\alpha}$,
where
\begin{equation}\label{TransferMatrix1}
  M(\theta, z)=\dfrac1{2\theta}
  \begin{bmatrix}
\theta^2+1 & (\theta^2-1)e^{-2iz} \\
    (\theta^2-1)e^{2iz} &  \theta^2+1
  \end{bmatrix}.
\end{equation}
The wave components of the left scattering solution~\eqref{ScattProblAsm} to the left and right of the $\delta'_\theta$-comb are similarly related by a matrix $ M_n(\theta,k)$ called the \textit{transfer matrix} for problem \eqref{ScattProblemEq}--\eqref{ScattProblAsm}; namely,
\begin{equation}\label{TMR}
  \begin{bmatrix}
    t_n(\theta,k)\\0
  \end{bmatrix}=
  M_n(\theta,k)\begin{bmatrix}
    1\\r_n(\theta,k)
  \end{bmatrix},
\end{equation}
where
\begin{equation}\label{TransferMatrixN}
  M_n(\theta,k)=M(\theta, (n-1)k)\cdots M(\theta, 2k)\,M(\theta, k)\,M(\theta, 0).
\end{equation}
The transfer matrix belongs to the special group $SU(1,1)$ consisting of all complex matrices of the form
\begin{equation*}
    \begin{bmatrix}
    z_1 & \bar{z}_2 \\
    z_2  &  \bar{z}_1
  \end{bmatrix},\qquad |z_1|^2-|z_2|^2=1.
\end{equation*}
This property together with \eqref{TMR} allows us to express the transfer matrix in terms of the scattering amplitudes:
\begin{equation}\label{TransferMatrixRepr}
  M_n(\theta, k)=
  \begin{bmatrix}
1/\overline{t_n}(\theta,k) & -\overline{r_n}(\theta,k)/\overline{t_n}(\theta,k)\\
  -r_n(\theta,k)/t_n(\theta,k) &  1/t_n(\theta,k)
  \end{bmatrix}.
\end{equation}
Thus, to calculate the transmission probabi\-li\-ty, it suffices to compute the transfer matrix~\eqref{TransferMatrixRepr}. Observe that the fact that $M_n(\theta,k) \in SU(1,1)$ yields the known relation
\begin{equation}\label{eq:scat-identity}
    |r_n(\theta,k)|^2 + |t_n(\theta,k)|^2 = 1;
\end{equation}
also, $t_n(\theta,k)$ vanishes for no $k\in\Real$ as~\eqref{TMR} would mean that $M_n(\theta,k)$ is singular.

The following theorem summarizes the basic properties of the transmission pro\-ba\-bility that can be obtained from the properties of the transfer matrix without explicitly calculating it.

\begin{thm}\label{Theorem1}
    Suppose that $\theta$ is different from $0$ and $1$.
    The transmission probability $T_n(\theta,k)$ has the following properties:
    \begin{itemize}
      \item[\textit{(i)}]  $T_n(\theta,k)$ is a $\pi$-periodic function of $k$;
      \item[\textit{(ii)}] $T_n(\theta,k)=T_n(\theta,\pi-k)$ for all $k\in(0,\pi)$, i.e., the graph of $T_n(\theta,\,\cdot\,)$ on the interval $(0,\pi)$ exhibits symmetry with respect to the line $k=\frac\pi2$.
      \item[\textit{(iii)}] $T_n(\theta,k)$ is invariant under the transformations $\theta\mapsto -\theta$ and $\theta\mapsto \theta^{-1}$, i.e.,
    \begin{equation*}
      T_n(\theta,k)=T_n(-\theta,k)=T_n(\theta^{-1},k)
    \end{equation*}
    for all $k\in \Real$.
    \end{itemize}
\end{thm}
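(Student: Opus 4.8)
The plan is to reduce all three statements to symmetry properties of the single-cell matrix $M(\theta,z)$ in~\eqref{TransferMatrix1} and then to propagate these properties through the product~\eqref{TransferMatrixN}. By the representation~\eqref{TransferMatrixRepr}, the top-left entry of $M_n(\theta,k)$ equals $1/\overline{t_n}(\theta,k)$, so
\[
  T_n(\theta,k)=|t_n(\theta,k)|^2=\bigl|[M_n(\theta,k)]_{11}\bigr|^{-2}.
\]
Hence it suffices to show that the modulus of the $(1,1)$ entry of $M_n(\theta,k)$ is invariant under $k\mapsto k+\pi$, under $k\mapsto\pi-k$, and under $\theta\mapsto-\theta$ and $\theta\mapsto\theta^{-1}$.

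For~\textit{(i)}, I would note that the $z$-dependence of $M(\theta,z)$ enters only through the phases $e^{\pm2iz}$, while the factors in~\eqref{TransferMatrixN} carry arguments $z=jk$ with integer $j\in\{0,1,\dots,n-1\}$. Replacing $k$ by $k+\pi$ multiplies each phase by $e^{\pm2ij\pi}=1$, so every factor, and therefore the entire product $M_n(\theta,k)$, is literally unchanged; $\pi$-periodicity of $T_n$ follows at once. For~\textit{(iii)}, I would record two elementary identities for the building block. Replacing $\theta$ by $-\theta$ leaves $\theta^2$ untouched but flips the prefactor $1/(2\theta)$, so $M(-\theta,z)=-M(\theta,z)$ and consequently $M_n(-\theta,k)=(-1)^nM_n(\theta,k)$, which changes only an overall sign and leaves $|[M_n]_{11}|$ fixed. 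Replacing $\theta$ by $\theta^{-1}$ keeps the diagonal entries and negates the off-diagonal ones, that is $M(\theta^{-1},z)=DM(\theta,z)D$ with $D=\mathrm{diag}(1,-1)$; since $D^2=I$, the inner factors telescope and $M_n(\theta^{-1},k)=DM_n(\theta,k)D$. Conjugation by $D$ does not affect the $(1,1)$ entry, so $|[M_n]_{11}|$ is again preserved, giving~\textit{(iii)}.

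The main obstacle is~\textit{(ii)}, which does \emph{not} follow from a cell-by-cell substitution: under $k\mapsto\pi-k$ each factor $M(\theta,jk)$ turns into its transpose (the phases $e^{\pm2iz}$ get swapped), but the product of transposes emerges in reversed order and need not coincide with $M_n(\theta,\pi-k)$ or its transpose. Instead I would use complex conjugation. Since $\theta$ is real, the diagonal entries of $M(\theta,z)$ are real and conjugation merely exchanges $e^{-2iz}\leftrightarrow e^{2iz}$, so $\overline{M(\theta,z)}=M(\theta,-z)$; applying this to every factor yields $\overline{M_n(\theta,k)}=M_n(\theta,-k)$. Combining this with the $\pi$-periodicity established in~\textit{(i)}, namely $M_n(\theta,-k)=M_n(\theta,\pi-k)$, gives
\[
  M_n(\theta,\pi-k)=\overline{M_n(\theta,k)}.
\]
Taking the modulus of the $(1,1)$ entry then produces $\bigl|[M_n(\theta,\pi-k)]_{11}\bigr|=\bigl|[M_n(\theta,k)]_{11}\bigr|$, i.e. $T_n(\theta,\pi-k)=T_n(\theta,k)$. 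The one genuine subtlety is therefore that~\textit{(ii)} must be obtained after~\textit{(i)}, through this conjugation identity, rather than by a direct reflection argument on the individual cells.
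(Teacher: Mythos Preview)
Your argument is correct, and parts~\textit{(i)} and the $\theta\mapsto-\theta$ half of~\textit{(iii)} coincide with the paper's proof. Two differences are worth pointing out.

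For~\textit{(ii)}, the paper goes directly: since $j$ is an integer, $e^{\pm 2ij(\pi-k)}=e^{\mp 2ijk}$, so $M(\theta,j(\pi-k))=\overline{M(\theta,jk)}$ and hence $M_n(\theta,\pi-k)=\overline{M_n(\theta,k)}$ in a single step. Your ``obstacle'' is therefore illusory: you correctly observe that the substitution swaps the off-diagonal phases and produces the \emph{transpose} of each factor, but for these particular matrices (real diagonal, off-diagonal entries mutually conjugate) the transpose \emph{is} the complex conjugate, and conjugation respects the order of a product. Your detour via $\overline{M_n(\theta,k)}=M_n(\theta,-k)$ followed by the $\pi$-periodicity from~\textit{(i)} is of course also valid, it just splits one observation into two.

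For the $\theta\mapsto\theta^{-1}$ half of~\textit{(iii)}, the paper argues physically: swapping $\theta$ and $\theta^{-1}$ in the interface conditions~\eqref{ScattProblemPI} is equivalent to reversing the direction of incidence, and transmission probability is direction-independent; it also records the single-cell identity $M(\theta^{-1},z)=M(\theta,z)^{-1}$. Your purely algebraic route, $M(\theta^{-1},z)=DM(\theta,z)D$ with $D=\mathrm{diag}(1,-1)$, is a genuinely different and arguably cleaner alternative: conjugation by $D$ propagates through the product~\eqref{TransferMatrixN} without any reordering, whereas passing from $M(\theta^{-1},z)=M(\theta,z)^{-1}$ to a statement about $M_n$ would require care because inversion reverses the order of factors. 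Your argument sidesteps that issue entirely.
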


\begin{proof}
    \textit{(i)}
    By construction, the entries of $M_n(\theta, k)$ are trigonometric polynomials of $k$. In particular, for the transmission amplitude we find by induction that
    \begin{equation*}
     \frac1{t_n(\theta,k)}=\sum_{m=0}^{n-1}c_{m,n}(\theta)e^{2imk}, \quad k \in \Real,
    \end{equation*}
    demonstrating $\pi$-periodicity of $T_n(\theta, k)$ in $k$.

    \textit{(ii)} We have $M(\theta, j(\pi-\kappa))=\overline{M(\theta, j\kappa)}$ for integer $j$, and thus $M_n(\theta, \pi-k)=\overline{M_n(\theta,k)}$ due to~\eqref{TransferMatrixN}. The explicit formula~\eqref{TransferMatrixRepr} implies that $t_n(\theta, \pi-k)=\overline{t_n(\theta,k)}$, so that $T_n(\theta,\pi-k)=T_n(\theta,k)$.

    \textit{(iii)} We also have  $M(-\theta, \kappa)=-M(\theta, \kappa)$ and thus $M_n(-\theta, k)=(-1)^{n}M_n(\theta,k)$.
    Again, using representation \eqref{TransferMatrixRepr}, we have $t_n(-\theta,k)=(-1)^{n}t_n(\theta,k)$, and therefore $T_n(-\theta,k)=T_n(\theta,k)$.

    Replacing $\theta$ by $\theta^{-1}$ in point interactions \eqref{ScattProblemPI} gives
    \begin{equation*}
      \psi(j-0)=\theta\psi(j+0), \quad\psi'(j-0)=\theta^{-1}\psi'(j+0),
    \end{equation*}
    so the transformation $\theta\mapsto \theta^{-1}$ is equivalent to a change of direction on the real axis.
    This assertion is further supported by the obvious equalities
    \begin{equation*}
      M(\theta^{-1}, \kappa)=M^{-1}(\theta, \kappa), \qquad M_n(\theta^{-1}, k)=M_n^{-1}(\theta,k).
    \end{equation*}
    Consequently, $T_n(\theta,k)=T_n(\theta^{-1},k)$ is a statement that the transmission probabi\-li\-ty is independent of the incidence direction.
\end{proof}

\begin{figure}[!ht]
  \centering
 \includegraphics[scale=.38]{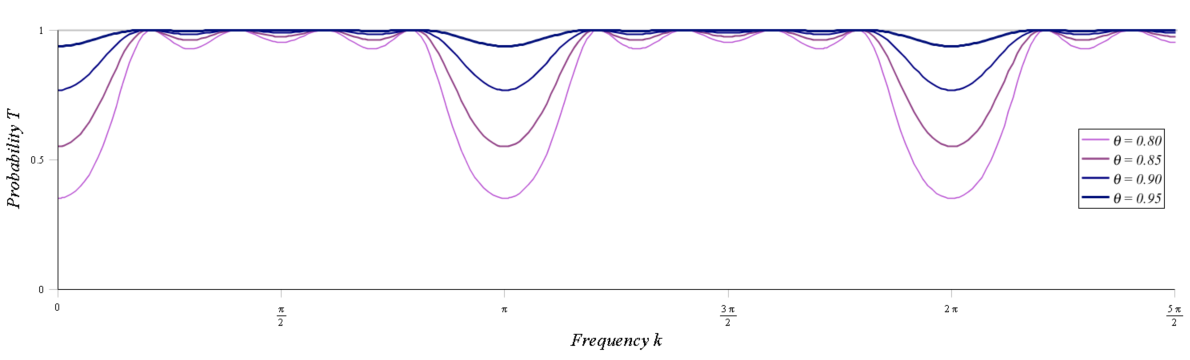}\\
  \caption{Plots of the transmission probability $T_5(\theta,k)$ for several values of $\theta$ close to $1$. }\label{FigThetaGoes1}
\end{figure}

As follows from the theorem, it is sufficient to compute  $T_n(\theta,k)$ for positive $\theta$ less than $1$.
The most interesting scenario arises when $\theta$ approaches zero. In this case, the comb of  $\delta'_\theta$-interactions becomes a filter, passing only particles with a narrow energy range (see Fig.~\ref{FigStart}).
We also note that the $\delta'_\theta$ point interactions disappear at $\theta=1$; for $\theta$ close to~$1$, transmission probability are also close to~$1$; see Fig.~\ref{FigThetaGoes1} and Corollary~\ref{cor:upper_bound}.

The following theorem states that the probability $T_n(\theta,k)$ tends to zero, as $\theta\to 0$, almost everywhere.

\begin{thm}
    Suppose $n\ge 2$ and $\theta\in(0,1)$. Then,  for any $k\in (0,\pi]$ other than $\frac\pi2$, the transmission probability $T_n(\theta,k)$ tends to zero as $\theta\to 0$.
\end{thm}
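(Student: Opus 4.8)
The plan is to analyze the behaviour of the transfer matrix $M_n(\theta,k)$ as $\theta\to0$ and extract the transmission amplitude from its $(1,1)$ entry via~\eqref{TransferMatrixRepr}. Since $T_n=|t_n|^2=1/|[M_n]_{11}|^2$, showing $T_n\to0$ is equivalent to showing that $|[M_n]_{11}(\theta,k)|\to\infty$ as $\theta\to0$, for every fixed $k\in(0,\pi]$ with $k\ne\frac\pi2$. The natural first step is to understand the single-factor matrix~$M(\theta,z)$ in this limit: writing $M(\theta,z)=\tfrac1{2\theta}\bigl(\theta^2 P_+(z)+P_-(z)\bigr)$, where $P_\pm(z)$ collect the $\theta^2$-dependent and $\theta^0$-dependent parts, one sees that $M(\theta,z)\sim\tfrac1{2\theta}P_-(z)$ dominates as $\theta\to0$, and the leading matrix $\tfrac1{2\theta}P_-(z)=\tfrac1{2\theta}\bigl[\begin{smallmatrix}1&-e^{-2iz}\\-e^{2iz}&1\end{smallmatrix}\bigr]$ is a rank-one matrix times $1/\theta$.

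The key observation I would exploit is that $M_n(\theta,k)$ is a product of $n$ such factors, each contributing a leading $1/(2\theta)$, so the dominant term of $M_n$ should scale like $\theta^{-n}$ unless cancellations occur. More precisely, I would write $M_n(\theta,k)=\theta^{-n}\,A(k)+O(\theta^{-n+2})$, where $A(k)$ is the product of the rank-one leading matrices $\tfrac12 P_-(jk)$ for $j=0,\dots,n-1$ (with $z=jk$ as in~\eqref{TransferMatrixN}). Because each $P_-(z)$ is rank one, the product telescopes: each factor $\tfrac12\bigl[\begin{smallmatrix}1&-e^{-2iz}\\-e^{2iz}&1\end{smallmatrix}\bigr]$ equals $v(z)\,w(z)^\top$ for suitable vectors, and the product collapses to a scalar prefactor $\prod_{j=0}^{n-2}\tfrac12(1-e^{-2i(j+1)k}e^{2ijk})$ type expression times a single rank-one matrix. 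Carrying out this collapse, the $(1,1)$ entry of $A(k)$ becomes a product of factors of the form $\tfrac12(1-e^{2ik})$ (up to phases), whose modulus is $|\sin k|$ up to constants. The explicit outcome is that $|[M_n]_{11}(\theta,k)|\sim C\,\theta^{-n}\,|\sin k|^{\,n-1}$ (or a comparable power), which blows up precisely when $\sin k\ne0$.

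The main obstacle I anticipate is controlling the potential cancellation in the leading $\theta^{-n}$ coefficient: a priori the scalar product of the $\sin$-type factors could vanish for special $k$, which would force me to pass to the next order in $\theta$ and would weaken the conclusion. I expect this to be exactly where the excluded value $k=\frac\pi2$ enters. Indeed, for $k\in(0,\pi)$ we have $\sin k>0$, so no individual factor vanishes; the delicate point is whether the product of the rank-one matrices can degenerate, i.e.\ whether the chain of vectors $w(jk)^\top v((j+1)k)$ contains a zero inner product. I would check that the inner products appearing in the telescoping are of the form $1-e^{2ik}$, which vanish only at $k\in\pi\mathbb Z$, hence never on $(0,\pi)$; the value $k=\frac\pi2$ is singled out not by a vanishing leading term but because there the symmetry of Theorem~\ref{Theorem1}\textit{(ii)} combined with the resonance structure keeps $T_n$ bounded away from $0$ (in fact $T_n(\theta,\tfrac\pi2)$ does not decay). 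Thus the proof reduces to verifying $A(k)\ne0$ entrywise for $k\in(0,\pi]\setminus\{\tfrac\pi2\}$.

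An alternative and perhaps cleaner route, which I would keep in reserve in case the rank-one telescoping is messier than expected, is to invoke the explicit closed form for $M_n(\theta,k)$ and $t_n(\theta,k)$ that the authors announce for arbitrary $n$ in Section~\ref{sec:transmission-resonances}: from such a formula one reads off the leading $\theta\to0$ asymptotics of $1/t_n$ directly and verifies that its dominant $\theta^{-n}$ term has a coefficient that vanishes only at $k=\frac\pi2$ (mod $\pi$). Either way, combining the blow-up $|[M_n]_{11}|\to\infty$ with $T_n=1/|[M_n]_{11}|^2$ yields $T_n(\theta,k)\to0$, completing the argument; by Theorem~\ref{Theorem1}\textit{(i)} and~\textit{(ii)} it suffices to treat $k\in(0,\tfrac\pi2)$, which further simplifies the bookkeeping.
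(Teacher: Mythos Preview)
Your overall strategy---expand each factor $M(\theta,jk)$ as $\tfrac1{2\theta}(P_-(jk)+\theta^2P_+(jk))$, extract the $\theta^{-n}$-leading product of rank-one matrices, telescope it, and read off the $(1,1)$ entry---is exactly the route the paper takes. However, your telescoping computation contains a sign/phase slip that is fatal, not cosmetic.

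Write $P_-(z)=\bigl[\begin{smallmatrix}1&-e^{-2iz}\\-e^{2iz}&1\end{smallmatrix}\bigr]=v(z)w(z)^\top$ with $v(z)=\bigl[\begin{smallmatrix}1\\-e^{2iz}\end{smallmatrix}\bigr]$ and $w(z)^\top=(1,\,-e^{-2iz})$. In the product $P_-((n{-}1)k)\cdots P_-(k)P_-(0)$ the scalar links are
\[
w(jk)^\top v((j{-}1)k)=1+e^{-2ijk}e^{2i(j-1)k}=1+e^{-2ik},
\]
so $|w(jk)^\top v((j{-}1)k)|=2|\cos k|$, \emph{not} $2|\sin k|$. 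The leading matrix therefore has $(1,1)$ entry $(1+e^{-2ik})^{n-1}/2^n$, and the paper's bound becomes $\bigl|2\theta^{\,n}|t_n|^{-1}-|\cos k|^{n-1}\bigr|\le c\,\theta^2$. Thus the $\theta^{-n}$-coefficient vanishes precisely at $k=\tfrac\pi2$, and \emph{this} is what singles out that value---not a symmetry or resonance argument. With your claimed factor $|\sin k|^{n-1}$ the leading term would be nonzero on all of $(0,\pi)$, which would force $T_n(\theta,\tfrac\pi2)\to0$ and contradict the fact (proved just before the theorem) that $T_n(\theta,\tfrac\pi2)=1$ for even $n$. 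Once you correct the inner product to $1+e^{-2ik}$, your argument coincides with the paper's proof verbatim.

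Your backup plan (read off the asymptotics from the Chebyshev formula of Section~\ref{sec:transmission-resonances}) would work, but note that in the paper's logical order this theorem precedes that formula, so invoking it would be a forward reference.
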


\begin{proof}
    Since
      \begin{equation*}
     2\theta M(\theta, \kappa)=
      \begin{bmatrix}
       1 & -e^{-2i\kappa} \\
        -e^{2i\kappa} & 1
      \end{bmatrix}+\theta^2 \begin{bmatrix}
       1 & e^{-2i\kappa} \\
        e^{2i\kappa} & 1
      \end{bmatrix},
    \end{equation*}
    we have the formula
    \begin{equation}\label{MnAsymptotics}
      2^n\theta^n M_n(\theta, k)=A_n(k)+\theta^2B_n(\theta, k),
    \end{equation}
    where
    \begin{equation*}
      A_n(k)=\begin{bmatrix}
       1 & \kern-4pt-e^{-2i(n-1)k} \\
    -e^{2i(n-1)k} &\kern-4pt 1
      \end{bmatrix}\cdots\begin{bmatrix}
       1 & \kern-4pt-e^{-4ik} \\
        -e^{4ik} & \kern-4pt1
      \end{bmatrix}\begin{bmatrix}
       1 & \kern-4pt-e^{-2ik} \\
        -e^{2ik} & \kern-4pt1
      \end{bmatrix}\begin{bmatrix}
       1 & \kern-4pt-1 \\
        -1 & \kern-4pt1
      \end{bmatrix}.
    \end{equation*}
    In addition, the norm of the matrix $B_n(\theta, k)$ is uniformly bounded on $(0,1)\times(0,\pi]$.
    Arguing by induction, one proves that
    \begin{equation*}
     A_n(k)=
      \begin{bmatrix}
      \phantom{-}(e^{-2ik}+1)^{n-1} & -(e^{-2ik}+1)^{n-1} \\
        -(e^{2ik}+1)^{n-1} & \;(e^{2ik}+1)^{n-1}
      \end{bmatrix}.
    \end{equation*}
    Using \eqref{MnAsymptotics} along with matrix representation \eqref{TransferMatrixRepr}, we estimate
    \begin{equation*}
      \Big|2^n\theta^n |t_n(\theta,k)|^{-1}-|e^{2ik}+1|^{n-1}\Big|\le c_1\theta^2,
    \end{equation*}
    where $c_1$ depends only on $n$. Since $|e^{2ik}+1|=2|\cos k|$, this inequality can be rewritten as
    \begin{equation*}
      \Big|2\theta^n |t_n(\theta,k)|^{-1}-|\cos k|^{n-1}\Big|\le c_2\theta^2.
    \end{equation*}
    If $k\neq\frac\pi2$, then the ratio $\theta^n/|t_n(\theta,k)|$
    has the strictly positive limit as $\theta\to 0$, and therefore $|t_n(\theta,k)|$ tends to zero.
\end{proof}

The convergence of $T_n(\theta,k)$ described in the theorem is not uniform with respect to $k$. In fact, for every $\theta>0$, there are $n-1$ frequencies $k$ in the range $(0,\pi)$ at which the transmission probability $T_n(\theta,k)$ is equal to $1$, as we demonstrate in the next section.

\section{Transmission resonances in scattering through $\delta'_\theta$-comb}
\label{sec:transmission-resonances}

The values $k$ at which the function $T_n(\theta,\,\cdot\,)$ reaches its local maxima are called \textit{transmission resonances}. The set of all transmission resonances on $(0,\pi)$ is denoted~$\cR_n^\theta$. Based on the results of computer simulations, we hypothesized that the scattering process through the $\delta'_\theta$-comb with $n$ point interactions demonstrates $n-1$ resonant values, i.e.,
\begin{equation*}
  \cR_n^\theta=\big\{k_1(\theta), k_2(\theta),\dots, k_{n-1}(\theta)\big\},
\end{equation*}
and that the probability $T_n(\theta,k)$ at these points is $1$. Below, we demonstrate this for $n=2,3,4$, and then derive the explicit formula for $T_n$ allowing a thorough description of the transmission through the $\delta'_\theta$-comb.

In view of Theorem~\ref{Theorem1},  the set $\cR_n^\theta$ is symmetric with respect to the point $k=\frac\pi2$ and shrinks to this point as $\theta$ tends to zero: $k_{n-j}(\theta)= \pi -k_j(\theta)$ and $k_j(\theta)\to \frac\pi2$, as $\theta\to 0$ for all $j=1,\dots,n-1$.  For even $n$, the set $\cR_n^\theta$ contains an odd number of points, and therefore, one of them must coincide with point $\frac\pi2$, namely $k_{n/2}(\theta)=\frac\pi2$. Furthermore, we observe that
\begin{equation*}
  M\left(\theta,\tfrac{\pi m}2\right)=\dfrac1{2\theta}
  \begin{bmatrix}
\theta^2+1 & (-1)^m (\theta^2-1) \\
    (-1)^m (\theta^2-1)&  \theta^2+1
  \end{bmatrix}
\end{equation*}
for every integer $m$, and $M\big(\theta,\tfrac{\pi (m+1)}2\big) M\big(\theta,\tfrac{\pi m}2\big)=E$,
where $E$ is the identity matrix. Hence, $M_n(\theta,\tfrac{\pi}{2})=E$ because product \eqref{TransferMatrixN} has an even number of factors, and  therefore $T_{n}(\theta,\frac\pi2)=1$. However, this is the only case where the probability $T_n(\theta,k)$ has a ``stationary'' resonance that does not change with $\theta$; other resonances are moving as $\theta\to 0$.

\subsection{Transmission probabilities and resonance sets for a small number of point interactions}
For ease of calculations, we introduce the notation $\theta=e^\kappa$; then
\begin{equation*}
  \cosh\kappa=\tfrac{\theta^2+1}{2\theta}, \qquad  \sinh\kappa=\tfrac{\theta^2-1}{2\theta},
\end{equation*}
and the matrix \eqref{TransferMatrix1} can be written as
\begin{equation*}
   M(\kappa,z) = \begin{bmatrix} \cosh\kappa &\sinh\kappa\: e^{-2iz} \\\sinh\kappa\: e^{2iz} & \cosh\kappa \end{bmatrix}.
\end{equation*}

The potential with two $\delta'_\theta$-centers is the simplest case: then $\cR_2^\theta=\left\{\frac\pi2\right\}$ and $T_2(\theta,\frac\pi2)=1$, as we explained above. The elements of $M_2(\theta,k)$ are of the form
\begin{equation}\label{t2}
  \frac1{t_2(\theta,k)}=e^{2ik}\sinh^2\kappa+\cosh^2\kappa, \qquad
  \frac {r_2(\theta,k)}{t_2(\theta,k)}=-\sinh\kappa\cosh\kappa\;(e^{2ik}+1).
\end{equation}
The fact that the transmission probability
\begin{equation*}
  T_2(\theta,k)=\frac1{4\sinh^2\kappa\cosh^2\kappa\cos^2k+1}=
  \frac{4\theta^4}{(1-\theta^4)^2\cos^2k+4\theta^4}
\end{equation*}
has no other resonances follows from a simple analysis of this function (see Fig.~\ref{FigN2}).

\begin{figure}[!t]
  \centering
  \includegraphics[scale=.38]{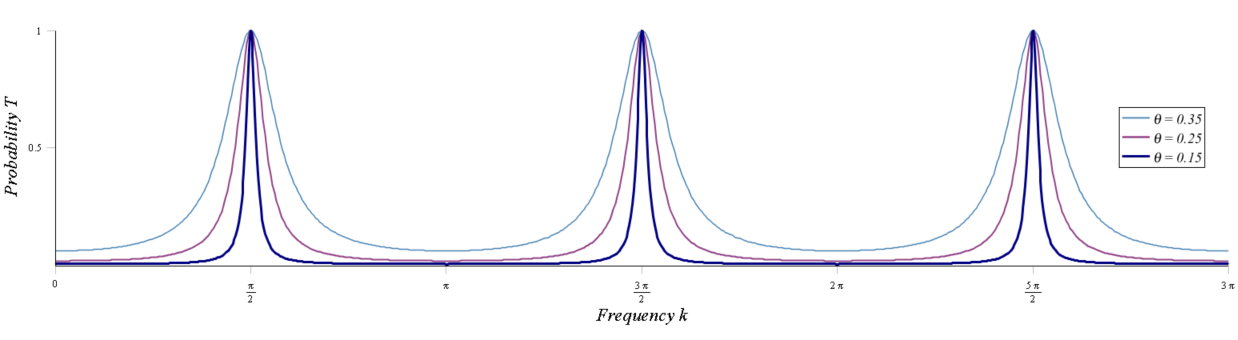}\\
  \caption{Plots of $T_2(\theta,k)$ for different values of $\theta$. }\label{FigN2}
\end{figure}

\begin{figure}[b]
  \centering
  \includegraphics[scale=.38]{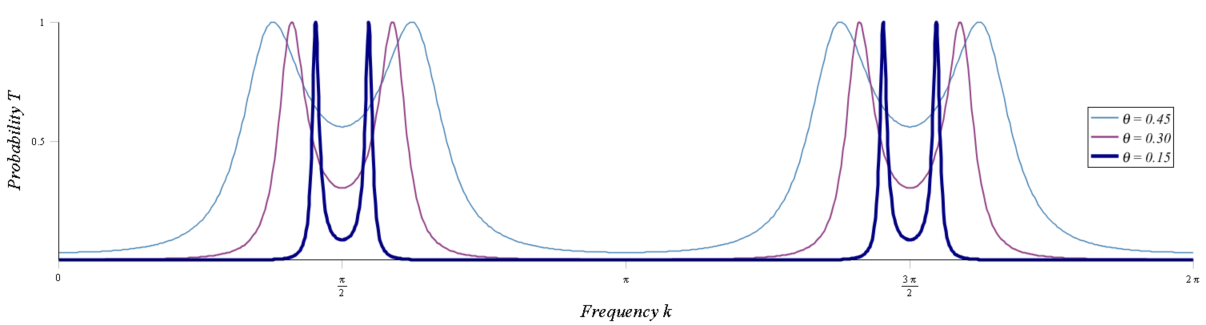}\\
  \caption{Plots of $T_3(\theta,k)$ for different values of $\theta$. }\label{FigN3}
\end{figure}

For a Hamiltonian with three point interactions, the resonance set $\cR_3^\theta$ consists of two points $k_1(\theta)$, $k_2(\theta)$ that are symmetric with respect to $\frac\pi2$. Direct calculation of the transfer matrix $M_3(\theta,k)$ gives
\begin{equation}\label{t3}
\begin{aligned}
   \frac1{t_3(\theta,k)}&=\sinh^2\kappa\cosh\kappa\;(e^{2 i k}+1)^2+\cosh\kappa,\\
   \frac{r_3(\theta,k)}{t_3(\theta,k)}&=-\cosh^2\kappa\sinh\kappa\;(e^{2ik}+1)^2+e^{2ik}\sinh\kappa.
\end{aligned}
\end{equation}
Then the transmission probability
\begin{multline*}
   T_3(\theta,k)=
   \frac1{\cosh^2\kappa\big(16\sinh^2\kappa\cosh^2\kappa\cos^4k-8\sinh^2\kappa\cos^2k+1\big)}
   \\
   =\frac{4\theta^6}{(1+\theta^2)^{2}\big((1-\theta^4)^2\cos^4k
   -2\theta^2(1-\theta^2)^2\cos^2k+\theta^4\big)}
\end{multline*}
has two maxima where it reaches the value $1$ (see Fig.~\ref{FigN3}). The resonances  are easiest to find from the equation $r_3(\theta,k)=0$. Since $(e^{2ik}+1)^2=4 e^{2ik}\cos^2k$, this equation can be written as
\begin{equation*}
  4\cosh^2\kappa\cos^2k=1.
\end{equation*}
Thus,  $\cos k_1(\theta)=\frac{\theta}{1+\theta^2}$, $\cos k_2(\theta)=-\frac{\theta}{1+\theta^2}$, and the resonances are expressed as
\begin{equation*}
  k_1(\theta)=\arccos\frac{\theta}{1+\theta^2},\qquad k_2(\theta)=\pi-\arccos\frac{\theta}{1+\theta^2}.
\end{equation*}
They can also be calculated approximately
\begin{equation*}
  k_1(\theta)=\tfrac\pi2-\theta+\tfrac56\,\theta^3+O(\theta^5),\qquad k_2(\theta)=\tfrac\pi2+\theta-\tfrac56\,\theta^3+O(\theta^5).
\end{equation*}

As the number of point interactions in the Hamiltonian increases, it becomes more complicated to compute the explicit formulas for the transmission probability. For $n=4$, we will find the resonances only using the equation
\begin{equation*}
     \frac{r_4(\theta,k)}{t_4(\theta,k)}=2 e^{2ik}\sinh\kappa\cosh\kappa\;(e^{2ik}+1)(1-2\cosh^2\kappa\cos^2k)=0.
\end{equation*}
The plot of $T_4(\theta,k)$ shown in Fig.~\ref{FigN4} has three peaks at the points
\begin{equation*}
k_1(\theta)=\arccos\frac{\sqrt{2}\theta}{1+\theta^2},\quad k_2=\frac\pi2,\quad k_3(\theta)=\pi-\arccos\frac{\sqrt{2}\theta}{1+\theta^2}.
\end{equation*}
In particular, we have approximate formulas for the moving resonances:
\begin{equation*}
  k_1(\theta)=\tfrac\pi2-\sqrt{2}\theta+\tfrac{2\sqrt{2}}3\,\theta^3+O(\theta^5),\qquad k_3(\theta)=\tfrac\pi2+\sqrt{2}\theta-\tfrac{2\sqrt{2}}3\,\theta^3+O(\theta^5).
\end{equation*}

\begin{figure}[!ht]
  \centering
  \includegraphics[scale=.36]{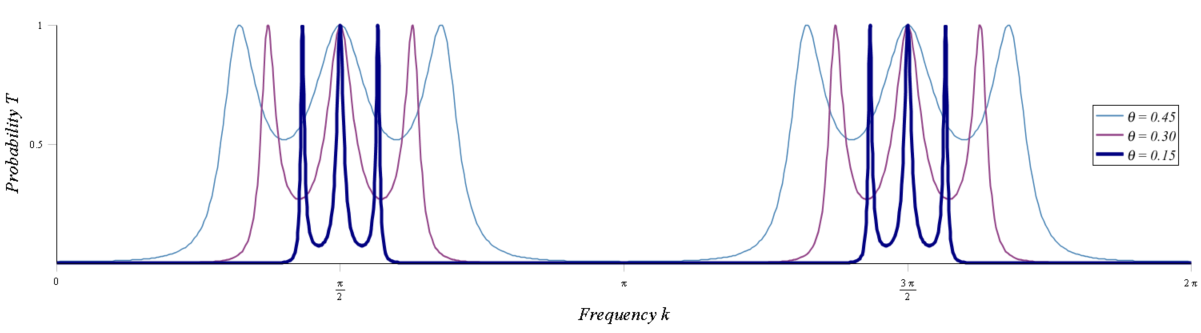}\\
  \caption{Plots of $T_4(\theta,k)$ for different values of $\theta$. }\label{FigN4}
\end{figure}

\subsection{Transmission probability and resonance set in general case}

For every $n \in \mathbb{N}$, there are explicit formulas for the scattering amplitudes $t_n(\theta,k)$ and $r_n(\theta,k)$ and thus for the transmission probability $T_n(\theta,k)$; in this sense, the point interaction Hamiltonian~$\cH$ of~\eqref{OperatorH0} gives a new exactly solvable quantum mechanical model.

\begin{thm}\label{thm:Chebyshev}
For every $n\in\mathbb{N}$ and every $k\in\mathbb{R}$, we have
\begin{equation}\label{tnrnTn}
\begin{aligned}
    \tfrac1{t_n(\theta, k)} & = \tfrac{1+\theta^2}{2\theta}\, U_{n-1}\left(\tfrac{1+\theta^2}{2\theta} \cos k\right)\,e^{i(n-1)k} - U_{n-2}\left(\tfrac{1+\theta^2}{2\theta} \cos k\right) e^{ink},\\
    \tfrac{r_n(\theta, k)}{t_n(\theta, k)} & = \tfrac{1-\theta^2}{2\theta}\, U_{n-1}\left(\tfrac{1+\theta^2}{2\theta} \cos k\right)\,e^{i(n-1)k},
\end{aligned}
\end{equation}
where $U_n$ is the Chebyshev polynomial of second kind. The transmission probability equals
\begin{equation}\label{TnExplicit}
  T_n(\theta,k)=\frac1{1+\frac{(1-\theta^2)^2}{4\theta^2}\: U_{n-1}^2\left(\tfrac{1+\theta^2}{2\theta} \cos k\right)}.
\end{equation}
\end{thm}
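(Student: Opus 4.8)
The plan is to exploit a conjugation structure that collapses the product \eqref{TransferMatrixN} into a single matrix power. Writing $\theta=e^\kappa$ as in the preceding subsection and setting $D=\operatorname{diag}(e^{-ik},e^{ik})$ and $M_0:=M(\theta,0)$, one checks directly from \eqref{TransferMatrix1} that
\begin{equation*}
  M(\theta,jk)=D^{\,j}M_0D^{-j}\qquad(j\in\mathbb{Z}),
\end{equation*}
since conjugation by $D^{\,j}$ multiplies the off-diagonal entries of $M_0$ by $e^{\mp 2ijk}$. Substituting this into \eqref{TransferMatrixN} and telescoping the interior factors $D^{-j}D^{\,j-1}=D^{-1}$ gives the compact formula
\begin{equation*}
  M_n(\theta,k)=D^{\,n-1}\,\bigl(M_0D^{-1}\bigr)^{n-1}M_0 .
\end{equation*}

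Next I would compute the power $P^{\,n-1}$ of $P:=M_0D^{-1}$ via the Cayley--Hamilton theorem. A short computation gives $\det P=\det M_0=1$ and $\tr P=\cosh\kappa\,(e^{ik}+e^{-ik})=2\xi$, where $\xi:=\cosh\kappa\cos k=\tfrac{1+\theta^2}{2\theta}\cos k$. For a unimodular $2\times2$ matrix Cayley--Hamilton reads $P^2=2\xi P-E$, and iterating it yields the Chebyshev power formula
\begin{equation*}
  P^{m}=U_{m-1}(\xi)\,P-U_{m-2}(\xi)\,E ,
\end{equation*}
which follows by induction from the three-term recurrence $U_{m}=2\xi U_{m-1}-U_{m-2}$. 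This is the step that converts the $k$-dependence into Chebyshev values and is the conceptual heart of the argument.

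It then remains to read off the bottom row of $M_n=D^{\,n-1}P^{\,n-1}M_0$, because \eqref{TransferMatrixRepr} identifies $(M_n)_{22}=1/t_n(\theta,k)$ and $(M_n)_{21}=-r_n(\theta,k)/t_n(\theta,k)$. The prefactor $D^{\,n-1}$ contributes the phase $e^{i(n-1)k}$ to both entries, and multiplying $P^{\,n-1}=U_{n-2}(\xi)P-U_{n-3}(\xi)E$ by $M_0$ on the right produces expressions in $\cosh\kappa,\sinh\kappa$ and $\cos k$. Using $2\xi U_{n-2}=U_{n-1}+U_{n-3}$ to eliminate $U_{n-3}$, together with the identities $\cosh^2\kappa-1=\sinh^2\kappa$, $\cosh\kappa=\tfrac{1+\theta^2}{2\theta}$ and $\sinh\kappa=\tfrac{\theta^2-1}{2\theta}$, collapses these to exactly the claimed forms \eqref{tnrnTn}; in particular one finds $(M_n)_{21}=\sinh\kappa\,U_{n-1}(\xi)\,e^{i(n-1)k}$, whence $r_n/t_n=-\sinh\kappa\,U_{n-1}(\xi)e^{i(n-1)k}=\tfrac{1-\theta^2}{2\theta}U_{n-1}(\xi)e^{i(n-1)k}$.

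Finally, for the transmission probability I would avoid computing $|t_n|$ directly and instead invoke the $SU(1,1)$ identity \eqref{eq:scat-identity}, which gives $1/T_n=|1/t_n|^2=1+|r_n/t_n|^2$. Since $\xi$, $U_{n-1}(\xi)$ and the coefficient $\tfrac{1-\theta^2}{2\theta}$ are real and $|e^{i(n-1)k}|=1$, this reduces to $|r_n/t_n|^2=\tfrac{(1-\theta^2)^2}{4\theta^2}U_{n-1}^2(\xi)$, yielding \eqref{TnExplicit}; the base case $n=1$ (with $U_0=1$, $U_{-1}=0$) reproduces $T_1$ of \eqref{T1} and confirms the indexing. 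As a consistency check one can also establish \eqref{tnrnTn} by induction on $n$ through $M_{n+1}=M(\theta,nk)M_n$, tracking the bottom row and again reducing with the Chebyshev recurrence, though the conjugation approach makes the emergence of $U_{n-1}$ far more transparent. I expect the main obstacle to be purely organizational --- spotting the telescoping conjugation and then managing the Chebyshev index shifts cleanly --- rather than any genuine analytic difficulty.
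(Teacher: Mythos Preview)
Your proposal is correct and takes a genuinely different route from the paper. The paper proves \eqref{tnrnTn} by straightforward induction on $n$: it verifies the cases $n=2,3$ by hand, then passes from $M_n$ to $M_{n+1}=M(\theta,nk)M_n$ and reduces using the Chebyshev recurrence; for \eqref{TnExplicit} it computes $|1/t_n|^2$ directly from the first line of \eqref{tnrnTn} and then invokes the identity $U_{n-2}U_n=U_{n-1}^2-1$. Your approach instead factors out the structural reason Chebyshev polynomials appear: the conjugation $M(\theta,jk)=D^jM_0D^{-j}$ telescopes the product to $D^{n-1}(M_0D^{-1})^{n-1}M_0$, and Cayley--Hamilton on the unimodular matrix $P=M_0D^{-1}$ with $\tr P=2\xi$ immediately yields $P^m=U_{m-1}(\xi)P-U_{m-2}(\xi)E$. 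This is cleaner conceptually and avoids both the explicit base-case checks and the $U_{n-2}U_n$ identity, since you derive \eqref{TnExplicit} from $1/T_n=1+|r_n/t_n|^2$ rather than from $|1/t_n|^2$. The paper's induction, on the other hand, is more elementary in that it needs no matrix-theoretic input beyond the product formula and could be followed mechanically. Both arguments are short; yours explains \emph{why} the answer has this form, the paper's simply verifies it.
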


Chebyshev polynomials are natural in the context of scattering on finite periodic potential arrays, cf.~\cite{Rorres1974,Griffiths01}.
Recall~\cite{Chihara2011} that the Chebyshev polynomials $U_n$ of second kind are defined by the relation
\begin{equation}\label{eq:ChebRel1}
     \sin \phi\: U_n(\cos \phi)  = \sin (n+1)\phi.
\end{equation}
For $n=0,1,2,3$, these polynomials are explicitly given by
\begin{equation*}
  U_0(\alpha) = 1, \quad U_1(\alpha) = 2\alpha, \quad U_2(\alpha) = 4\alpha^2 - 1,\quad U_3(\alpha) = 8\alpha^3-4\alpha,
\end{equation*}
and for all $n\ge2$, they satisfy the recurrence relation
\begin{equation}\label{ChebRel}
 U_{n}(\alpha) = 2\alpha U_{n-1}(\alpha) - U_{n-2}(\alpha).
\end{equation}
For ease of notation, we will write $a_n$ and $b_n$ instead of the entries $1/t_n$ and $r_n/t_n$ of the transfer matrix $M_n(\theta,k)$, respectively. For $n=2$ and $n=3$, these entries are calculated in \eqref{t2} and \eqref{t3}. Setting $\theta = e^\kappa$ and $\alpha = \cosh\kappa \cos k$, and using the relations
\begin{equation*}
  \sinh^2\kappa=\cosh^2\kappa-1,\quad 2 e^{ik}\cos k=1+e^{2ik},\quad 2 e^{2ik}\cos k= e^{ik}+e^{3ik},
\end{equation*}
we represent the corresponding $a_n$ and $b_n$ as follows:
\begin{alignat*}{2}
    & a_2 = 2\alpha\: e^{ik} \cosh\kappa - e^{2ik}, &  b_2 &= -2 \alpha\: e^{ik}\sinh\kappa ,\\
    & a_3 =  (4\alpha^2 - 1) e^{2ik} \cosh\kappa - 2\alpha e^{3ik},
         \qquad & b_3 &= -(4 \alpha^2 - 1)  e^{2ik} \sinh\kappa.
\end{alignat*}

\begin{proof}
    The proof of \eqref{tnrnTn} is by induction. The above relations show that the statements hold for $n=2$ and $n=3$. Assuming that the formulas hold for the entries $a_{n}$ and $b_{n}$ of the transfer matrix $M_n(\theta,k)$, we then find the entries of $M_{n+1}(\theta,k)$ from the relation $ M_{n+1}(\theta,k) = M(\theta, nk) M_{n}(\theta,k)$, i.e.
    \[
        \begin{bmatrix} \phantom{-}\bar{a}_{n+1} &  -\bar{b}_{n+1} \\ -b_{n+1} &  \phantom{-}a_{n+1} \end{bmatrix}=\begin{bmatrix} \cosh\kappa &\sinh\kappa\: e^{-2ink} \\ \sinh\kappa\: e^{2ink} & \cosh\kappa \end{bmatrix}\begin{bmatrix}  \phantom{-}\bar{a}_n &  -\bar{b}_n \\ -b_n &  \phantom{-}a_n \end{bmatrix}.
    \]
    Replacing $\sinh^2\alpha$ with $\cosh^2\alpha-1$ and $e^{i(n-1)k} + e^{i(n+1)k}$ with $2 e^{ink}\cos k$, we get
    \begin{multline*}
        {a}_{n+1} = \cosh\kappa\, {a}_{n} - \sinh\kappa \:e^{2nik}\, \bar{b}_{n} \\
            = \cosh^2\kappa\, U_{n-1}(\alpha)\,e^{i(n-1)k} - \cosh\kappa\,U_{n-2}(\alpha)  e^{ink}
              + \sinh^2\kappa\, U_{n-1}(\alpha)\,e^{i(n+1)k} \\
            = \big(2\alpha U_{n-1}(\alpha) - U_{n-2}(\alpha)\big) \:e^{ink} \cosh\kappa - U_{n-1}(\alpha)\,e^{i(n+1)k}\\
            = U_n(\alpha)\: e^{ink} \cosh\kappa - U_{n-1}(\alpha)\,e^{i(n+1)k},
    \end{multline*}
    and, similarly,
    \begin{multline*}
        -b_{n+1} = \sinh\kappa\: e^{2nik}\, \bar{a}_{n} - \cosh\kappa\, {b}_{n}
            = \sinh\kappa\cosh\kappa\, U_{n-1}(\alpha)\,e^{i(n+1)k} \\- \sinh\kappa\,U_{n-2}(\alpha) \, e^{ink}
           + \sinh\kappa\cosh\kappa\, U_{n-1}(\alpha)\,e^{i(n-1)k}
            \\= \sinh\kappa\: (2\alpha U_{n-1}(\alpha) - U_{n-2}(\alpha)) \: e^{ink}
            =  \sinh\kappa\: U_n(\alpha) \: e^{ink}
    \end{multline*}
    due to the recurrent relation~\eqref{ChebRel} satisfied by $U_n$, thus proving \eqref{tnrnTn}.

    Since $|a-b e^{ik}|^2=a^2+b^2-2ab\cos k$ for any two real numbers $a$ and $b$, we have
    \begin{multline*}
      \frac1{|t_n(\theta, k)|^2}  = \left|\cosh\kappa\: U_{n-1}(\alpha)- U_{n-2}(\alpha) e^{ik}\right|^2
      \\=
      \cosh^2\kappa\: U_{n-1}^2(\alpha)+ U_{n-2}^2(\alpha)-2\alpha U_{n-1}(\alpha)U_{n-2}(\alpha).
    \end{multline*}
    The recurrent relation \eqref{ChebRel} allows us to rewrite the above as
    \begin{equation}\label{TnStep}
      \frac1{|t_n(\theta, k)|^2}  =
      \cosh^2\kappa\: U_{n-1}^2(\alpha)- U_{n-2}(\alpha)U_{n}(\alpha).
    \end{equation}
    Next, we note that $U_{n-2}U_{n}= U_{n-1}^2-1$; this follows from the defining relation $\sin\phi\: U_n(\cos \phi)=\sin(n+1)\phi$ for the Chebyshev polynomials and straightforward transformations:
    \begin{multline*}
      U_{n-2}(\cos \phi)U_{n}(\cos \phi)= \frac{\sin(n-1)\phi\:\sin(n+1)\phi}{\sin^2\phi}\\=\frac{\cos2\phi-\cos2n\phi}{2\sin^2\phi}
      =\frac{\sin^2n\phi -\sin^2 \phi}{\sin^2\phi}=U_{n-1}^2(\cos \phi)-1.
    \end{multline*}
    Finally, we conclude from \eqref{TnStep} that
    \begin{multline*}
      \frac1{|t_n(\theta, k)|^2}  = (\cosh^2\kappa-1)\: U_{n-1}^2(\alpha)+1\\=\sinh^2 \kappa\: U_{n-1}^2(\alpha)+1=\tfrac{(1-\theta^2)^2}{4\theta^2}\:U_{n-1}^2\left(\tfrac{\theta^2+1}{2\theta} \cos k\right)+1.
    \end{multline*}
    The proof is complete.
\end{proof}


\begin{thm}
    For every integer $n\ge 2$ and $\theta\in(0,1)$, the resonance set $\cR_{\theta,n} $ for the transmission probability $T_n(\theta,k)$ contains $n-1$ points $k_1(\theta),\dots, k_{n-1}(\theta)$, where $k_j(\theta)$ is the root of the equation
    \begin{equation}\label{CosK}
         \cos k_j = \frac{2\theta}{\theta^2+1}\cos \frac{\pi j}{n}
    \end{equation}
on the interval $(0,\pi)$, $j =1,2,\dots,n-1$.    At these points, $T_n(\theta,k)$ reaches its global maximum, which is equal to $1$.
\end{thm}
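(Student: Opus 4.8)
The plan is to read the resonances directly off the closed-form expression \eqref{TnExplicit} for $T_n(\theta,k)$ supplied by Theorem~\ref{thm:Chebyshev}. For $\theta\in(0,1)$ the coefficient $(1-\theta^2)^2/(4\theta^2)$ is strictly positive and $U_{n-1}^2\ge0$, so the denominator in \eqref{TnExplicit} is at least $1$; hence $T_n(\theta,k)\le1$ for all $k$, with equality exactly when $U_{n-1}\bigl(\tfrac{1+\theta^2}{2\theta}\cos k\bigr)=0$. The whole problem therefore reduces to locating the zeros of this Chebyshev factor: each such zero realizes the global maximum value $1$ and is thus automatically a local maximum, i.e., a point of $\cR_n^\theta$.

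First I would compute the zeros of $U_{n-1}$. Using the defining relation \eqref{eq:ChebRel1}, for $\phi\in(0,\pi)$ we have $\sin\phi>0$, so $U_{n-1}(\cos\phi)=0$ is equivalent to $\sin n\phi=0$, giving $\phi=\pi j/n$ for $j=1,\dots,n-1$. These are $n-1$ distinct points of $(0,\pi)$, and since $\cos$ is injective there, the numbers $\cos(\pi j/n)$ are $n-1$ distinct reals; as $\deg U_{n-1}=n-1$, they are precisely the roots of $U_{n-1}$.

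Next I would solve $\tfrac{1+\theta^2}{2\theta}\cos k=\cos(\pi j/n)$ for each $j$, which is exactly \eqref{CosK} after dividing by $\tfrac{1+\theta^2}{2\theta}$. The crucial point is that its right-hand side $\tfrac{2\theta}{1+\theta^2}\cos(\pi j/n)$ lies strictly in $(-1,1)$: for $\theta\in(0,1)$ one has $\tfrac{2\theta}{1+\theta^2}\in(0,1)$ because $1+\theta^2-2\theta=(1-\theta)^2>0$, while $|\cos(\pi j/n)|<1$ for $j=1,\dots,n-1$, so the product has absolute value $<1$. Because $k\mapsto\cos k$ is a decreasing bijection of $(0,\pi)$ onto $(-1,1)$, each equation \eqref{CosK} has a unique root $k_j(\theta)\in(0,\pi)$, and the distinctness of the values $\cos(\pi j/n)$ forces the $k_j(\theta)$ to be pairwise distinct. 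At each $k_j(\theta)$ the argument of $U_{n-1}$ in \eqref{TnExplicit} is a root, hence $T_n(\theta,k_j(\theta))=1$; this is the global maximum, so $k_j(\theta)\in\cR_n^\theta$, yielding the asserted $n-1$ resonances.

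The argument is short because the heavy lifting was already done in Theorem~\ref{thm:Chebyshev}; the only genuine point of care---the main, if modest, obstacle---is verifying that $\tfrac{2\theta}{1+\theta^2}\cos(\pi j/n)$ stays strictly inside $(-1,1)$ so that \eqref{CosK} is actually solvable in $(0,\pi)$ for every index $j$, which is exactly where the hypothesis $\theta\in(0,1)$ enters (at $\theta=1$ the factor $\tfrac{2\theta}{1+\theta^2}$ equals $1$ and the interactions degenerate, giving $T_n\equiv1$). If one additionally wished to show that these $n-1$ points \emph{exhaust} the resonance set, it would suffice to note that $T_n=1$ holds nowhere else on $(0,\pi)$, since we have characterized all zeros of $U_{n-1}$.
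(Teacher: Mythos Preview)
Your argument is correct and follows essentially the same path as the paper's: reduce the condition $T_n(\theta,k)=1$ to the equation $U_{n-1}\bigl(\tfrac{1+\theta^2}{2\theta}\cos k\bigr)=0$ and invoke the known zeros $\cos(\pi j/n)$ of $U_{n-1}$. You are in fact more careful than the paper in one respect: you explicitly verify that the right-hand side of \eqref{CosK} lies strictly in $(-1,1)$, so that a unique $k_j(\theta)\in(0,\pi)$ exists for each $j$. The paper reaches the same condition via $r_n(\theta,k)=0$ and the identity \eqref{eq:scat-identity}, which amounts to the same thing.

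One cautionary note on your final parenthetical remark: showing that $T_n=1$ holds only at the $k_j(\theta)$ does \emph{not} by itself establish that these points exhaust $\cR_n^\theta$, because $\cR_n^\theta$ is defined as the set of \emph{all} local maxima of $T_n(\theta,\cdot)$, and a priori there could be local maxima with value strictly less than $1$. The paper actually proves exhaustion (going beyond the theorem as literally stated) by differentiating \eqref{TnExplicit}: the zeros of $\partial T_n/\partial k$ on $[0,\pi)$ are the roots of $U_{n-1}(\alpha)\,U_{n-1}'(\alpha)\,\sin k=0$ with $\alpha=\tfrac{1+\theta^2}{2\theta}\cos k$, and the critical points arising from the factors $U_{n-1}'(\alpha)$ and $\sin k$ are shown to be local minima. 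If you want your remark on exhaustion to be complete, this derivative analysis is the missing ingredient.
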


\begin{proof}
  It follows from~\eqref{eq:scat-identity} that, for all $k\in \Real$, $T_n(\theta,k) \le 1$ and that $T_n(\theta,k) =1$ if and only if $r_n(\theta,k)=0$.
  By Theorem~\ref{thm:Chebyshev}, the equation $r_n(\theta,k)=0$ is equivalent to
  \begin{equation*}
   U_{n-1}\left(\tfrac{\theta^2+1}{2\theta} \cos k\right)=0.
  \end{equation*}
  It is known~\cite{Chihara2011} that the polynomial $U_{n-1}(\alpha)$ has $n-1$ simple roots
\begin{equation*}
  \alpha_j=\cos\frac{\pi j}{n}, \quad j=1,2,\dots,n-1.
\end{equation*}
 Hence, the roots $k_j(\theta)$ of \eqref{CosK} belong to $\cR_{\theta,n}$ and $T_n(\theta,k_j(\theta))=1$.
 We next prove that there are no other transmission resonances. Observe that if $\beta_j$ and $\beta_{j+1}$ are two consecutive critical points of $U_{n-1}$, then $U_{n-1}(\beta_j)U_{n-1}(\beta_{j+1})<0$ and $U_{n-1}$ is strictly monotone on $[\beta_j, \beta_{j+1}]$. Since the derivative $\frac{\partial T_n}{\partial k}$ has zeros at the points~$k\in[0,\pi)$ that are roots of the equation
\begin{equation*}
   U_{n-1}\left(\tfrac{\theta^2+1}{2\theta} \cos k\right)U_{n-1}'\left(\tfrac{\theta^2+1}{2\theta} \cos k\right)\sin k=0,
\end{equation*}
we conclude that all critical points of $T_n(\theta,\,\cdot\,)$ other than $k_j(\theta)$, $j=1,2,\dots,n-1$, are points of its local minimum.
\end{proof}


\begin{figure}[b]
  \centering
  \includegraphics[scale=.4]{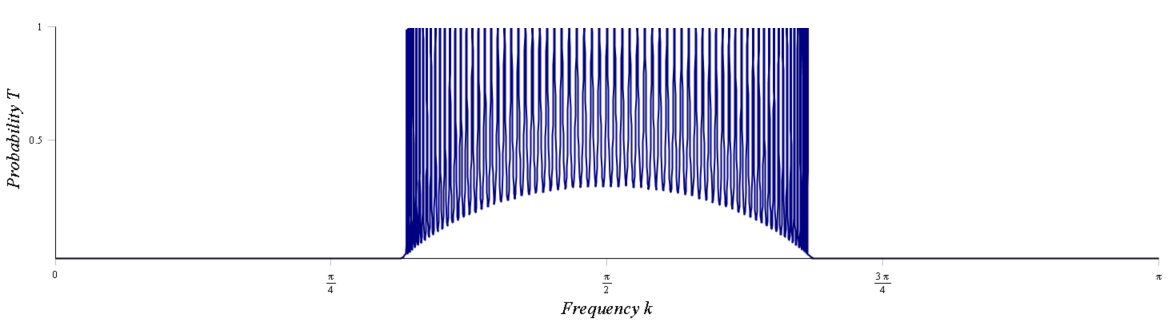}\\
  \caption{Plot of $T_{80}(\theta,k)$ for $\theta=0.3$.}\label{fig:n80}
\end{figure}


For a fixed $\theta$, the peaks of the transmission probabilities $T_n(\theta,k)$, for all $n$, belong to the interval of~$k$ with
$\frac{\theta^2 + 1}{2\theta}|\cos k| < 1$, i.e., to the interval
\begin{equation*}
    I_\theta = \left( \tfrac{\pi}2 - \arcsin{\tfrac{2\theta}{\theta^2 +1}},
                            \tfrac{\pi}2 + \arcsin{\tfrac{2\theta}{\theta^2 +1}}\right).
\end{equation*}
The plot of $T_n(\theta,k)$ for a fixed $\theta$ and $n=80$ is given in Fig.~\ref{fig:n80}; note the dense array of transmission peaks over the range $I_\theta$.

The explicit representation \eqref{TnExplicit} of the transmission probability allows us to analyze its behaviour as $\theta\to0$ more precisely.

\begin{cor}
  For any $k\in(0,\pi)$ other than $\frac\pi2$, the following estimate holds:
  \begin{equation*}
    T_n(\theta,k)\le \frac{c\:\theta^{2n}}{|\cos k|^{2(n-1)}},\qquad \text{as } \theta\to 0,
  \end{equation*}
  where  $c$ is independent of $\theta$ and $k$.
\end{cor}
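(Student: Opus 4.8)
The plan is to start from the explicit formula~\eqref{TnExplicit} for $T_n(\theta,k)$ and reduce the claim to a one-sided bound on the Chebyshev factor. Writing $\alpha=\tfrac{1+\theta^2}{2\theta}\cos k$ and using the elementary inequality $\tfrac1{1+X}\le\tfrac1X$ for $X>0$, I would first record
\begin{equation*}
  T_n(\theta,k)\le \frac{4\theta^2}{(1-\theta^2)^2\,U_{n-1}^2(\alpha)}.
\end{equation*}
Everything then reduces to bounding $U_{n-1}^2(\alpha)$ from below by a suitable power of $\alpha$.

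The key lemma I would isolate is that $U_{n-1}^2(\alpha)\ge\alpha^{2(n-1)}$ whenever $|\alpha|\ge 1$. To prove it I substitute $\alpha=\cosh\mu$ with $\mu\ge0$ and use the hyperbolic counterpart $U_{n-1}(\cosh\mu)=\sinh(n\mu)/\sinh\mu$ of the defining relation~\eqref{eq:ChebRel1}. Setting $g_m:=\sinh(m\mu)/\sinh\mu$ and expanding $\sinh(n\mu)=\sinh\mu\cosh((n-1)\mu)+\cosh\mu\sinh((n-1)\mu)$ gives the recursion $g_n=\cosh((n-1)\mu)+\cosh\mu\,g_{n-1}$, from which $g_n\ge\cosh^{n-1}\mu$ follows by an immediate induction starting from $g_1=1$. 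Since $U_{n-1}$ has definite parity, the same bound extends to $\alpha\le-1$, yielding $U_{n-1}^2(\alpha)\ge\alpha^{2(n-1)}$ for all $|\alpha|\ge1$.

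Then I would assemble the estimate. For a fixed $k\ne\tfrac\pi2$ the quantity $\tfrac{1+\theta^2}{2\theta}|\cos k|$ exceeds $1$ once $\theta$ is small enough; this is exactly the role of the ``$\theta\to0$'' qualifier, and it is also why the bound must fail inside the transmission window $|\alpha|<1$, where the resonance peaks live. In that regime the lemma applies, and using $\tfrac{1+\theta^2}{2\theta}\ge\tfrac1{2\theta}$ I get $\alpha^{2(n-1)}\ge(2\theta)^{-2(n-1)}|\cos k|^{2(n-1)}$, whence
\begin{equation*}
  T_n(\theta,k)\le \frac{4\theta^2(2\theta)^{2(n-1)}}{(1-\theta^2)^2\,|\cos k|^{2(n-1)}}
  =\frac{4^n\,\theta^{2n}}{(1-\theta^2)^2\,|\cos k|^{2(n-1)}}.
\end{equation*}
Restricting to $\theta\le\tfrac12$ bounds $(1-\theta^2)^{-2}$ by the constant $\tfrac{16}{9}$, so the claim holds with $c=\tfrac{16}{9}4^n$, independent of $\theta$ and $k$.

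I expect the only genuine work to be the Chebyshev lower bound $U_{n-1}^2(\alpha)\ge\alpha^{2(n-1)}$; the remainder is bookkeeping. The subtlety I would flag explicitly is the domain restriction $|\alpha|\ge1$: the inequality really requires $k$ bounded away from $\tfrac\pi2$ relative to $\theta$, so I would state the conclusion as holding for each fixed $k\ne\tfrac\pi2$ and all sufficiently small $\theta$, with the constant $c$ uniform over that range.
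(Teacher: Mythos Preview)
Your proof is correct and follows the same overall architecture as the paper: start from~\eqref{TnExplicit}, reduce to a lower bound on $U_{n-1}^2(\alpha)$ with $\alpha=\tfrac{1+\theta^2}{2\theta}\cos k$, and then substitute. The only substantive difference is how that lower bound is obtained. The paper simply invokes the leading-term expansion $U_{n-1}(\alpha)=2^{n-1}\alpha^{n-1}+O(\alpha^{n-3})$ to assert $|U_{n-1}(\alpha)|\ge 2^{n-2}|\alpha|^{n-1}$ for $|\alpha|$ sufficiently large, leaving the threshold implicit. You instead prove the clean inequality $|U_{n-1}(\alpha)|\ge|\alpha|^{n-1}$ for every $|\alpha|\ge1$ via the hyperbolic identity $U_{n-1}(\cosh\mu)=\sinh(n\mu)/\sinh\mu$ and a one-line induction. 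Your route gives an explicit domain of validity (namely $|\alpha|\ge1$, equivalently $|\cos k|\ge\tfrac{2\theta}{1+\theta^2}$) and an explicit constant $c=\tfrac{16}{9}4^n$, at the cost of a slightly weaker coefficient in the Chebyshev bound; the paper's asymptotic gives the sharper factor $2^{n-2}$ but never names the threshold in~$\theta$. Both deliver the stated corollary.
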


\begin{proof}
  For large $\alpha$, we have $U_{n-1}(\alpha)=2^{n-1}\alpha^{n-1}+O(\alpha^{n-3})$. Hence,
  $$|U_{n-1}(\alpha)|\ge 2^{n-2}|\alpha|^{n-1}$$ as $|\alpha|\to \infty$. If $d=|\cos k|>0$, then
  \begin{equation*}
    \left|\frac{\theta^2+1}{2\theta} \cos k\right|=\frac d{2\theta}+O(\theta),\quad\text{as } \theta\to 0,
  \end{equation*}
  and therefore
  \begin{equation*}
    \left|U_{n-1}\left(\tfrac{\theta^2+1}{2\theta} \cos k\right)\right|\ge \frac12 \frac{d^{n-1}}{\theta^{n-1}}.
  \end{equation*}
  Finally, we conclude from \eqref{TnExplicit} that
  \begin{equation*}
  T_n(\theta,k)\le\frac1{1+ c_1 d^{2(n-1)}/\theta^{2n}}\le\frac{c\:\theta^{2n}}{d^{2(n-1)}},
  \end{equation*}
 as claimed.
\end{proof}

Another interesting observation is that the infimum of $T_n(\theta,k)$ over~$n$ coincides with limit inferior, is positive for $k\in I_\theta$, and takes the following explicit form.


\begin{figure}[b]
  \centering
  \includegraphics[scale=.4]{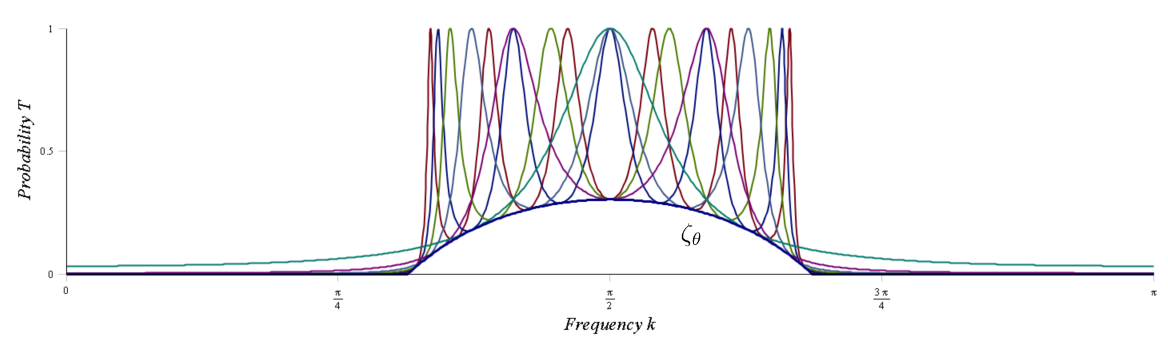}\\
  \caption{Envelope $\zeta_\theta(k)$ and plots of $T_n(\theta,k)$ for $\theta=0.3$ and several different values of $n$.}\label{fig:envelop}
\end{figure}



\begin{cor}
For a fixed $\theta\in(0,1)$ and all $k \in [0,\pi)$, it holds
\begin{equation*}
     \inf_{n\in \mathbb{N}} T_n(\theta,k) = \begin{cases}
        1 - \dfrac{(\theta^2 -1)^2}{(\theta^2 + 1)^2 \sin^2 k}, \qquad &k \in I_\theta; \\
        0, & k \in [0,\pi)\setminus I_\theta.
    \end{cases}
\end{equation*}
\end{cor}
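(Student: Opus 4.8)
The plan is to read everything off the explicit formula \eqref{TnExplicit}. Writing $\alpha=\frac{\theta^2+1}{2\theta}\cos k$ and $A=\frac{(1-\theta^2)^2}{4\theta^2}$, we have $T_n(\theta,k)=\bigl(1+A\,U_{n-1}^2(\alpha)\bigr)^{-1}$, so that
\[
  \inf_{n\in\mathbb{N}} T_n(\theta,k)=\frac{1}{1+A\,\sup_{n\ge 1} U_{n-1}^2(\alpha)},
\]
with the convention that $\sup_n U_{n-1}^2(\alpha)=+\infty$ forces the infimum to be $0$. The whole problem thus reduces to computing $\sup_n U_{n-1}^2(\alpha)$, and the dichotomy in the statement is precisely $|\alpha|<1$ (equivalently $k\in I_\theta$) versus $|\alpha|\ge 1$ (equivalently $k\in[0,\pi)\setminus I_\theta$).

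For $k\in I_\theta$ we have $|\alpha|<1$, so I would set $\alpha=\cos\phi$ with $\phi=\arccos\alpha\in(0,\pi)$; the defining relation \eqref{eq:ChebRel1} gives $U_{n-1}(\alpha)=\sin(n\phi)/\sin\phi$, whence $U_{n-1}^2(\alpha)=\sin^2(n\phi)/\sin^2\phi$. The decisive point is that $\sup_n\sin^2(n\phi)=1$: when $\phi/\pi$ is irrational this follows from Kronecker's density theorem, which makes $\{n\phi\bmod\pi\}$ dense in $[0,\pi)$ so that $n\phi$ approaches $\pi/2\bmod\pi$ arbitrarily closely. Hence $\sup_n U_{n-1}^2(\alpha)=1/\sin^2\phi=1/(1-\alpha^2)$, and substituting this back I would finish with the elementary algebra
\[
  \inf_n T_n(\theta,k)=\frac{1-\alpha^2}{1-\alpha^2+A}=1-\frac{(\theta^2-1)^2}{(\theta^2+1)^2\sin^2 k},
\]
where the reduction uses the identity $(1-\theta^2)^2+4\theta^2=(1+\theta^2)^2$.

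For $k\in[0,\pi)\setminus I_\theta$ we have $|\alpha|\ge 1$. On the boundary $|\alpha|=1$ one has $U_{n-1}(\pm1)=\pm n$, so $U_{n-1}^2(\alpha)=n^2\to\infty$; for $|\alpha|>1$ one writes $\alpha=\pm\cosh\psi$ and uses $U_{n-1}(\pm\cosh\psi)=(\pm1)^{n-1}\sinh(n\psi)/\sinh\psi$, which grows exponentially. In either case $U_{n-1}^2(\alpha)\to\infty$, so $T_n(\theta,k)\to 0$; since every $T_n(\theta,k)$ is positive, $\inf_n T_n(\theta,k)=0$.

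The single nontrivial ingredient is the identity $\sup_n\sin^2(n\phi)=1$ for $k\in I_\theta$; everything else is bookkeeping with \eqref{TnExplicit} and \eqref{eq:ChebRel1}. This step is also what makes the infimum coincide with the $\liminf$: for irrational $\phi/\pi$ the value $1$ is only approached along a subsequence and never attained, so the bound is genuinely a $\liminf$ rather than a minimum, whereas at $\phi=\pi/2$ (i.e.\ $k=\pi/2$), where $T_n$ alternates between $1$ and the claimed value, it is attained. The one point demanding care is that when $\phi/\pi$ is rational with odd denominator the periodic sequence $\sin^2(n\phi)$ stays strictly below $1$; such $k$ form a countable exceptional set on which the stated expression is only a lower bound for $\inf_n T_n$, and I would either restrict attention to the generic dense set of frequencies or note that these isolated points do not affect the band-pass envelope $\zeta_\theta$.
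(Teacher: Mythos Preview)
Your argument tracks the paper's proof almost line for line: both reduce via \eqref{TnExplicit} to computing $\sup_n U_{n-1}^2(\alpha)$, split on $|\alpha|<1$ versus $|\alpha|\ge1$, and invoke \eqref{eq:ChebRel1} inside $I_\theta$. For $|\alpha|\ge1$ the paper simply bounds $U_{n-1}^2(\alpha)\ge U_{n-1}^2(1)=n^2$, which is slightly slicker than your hyperbolic expansion but equivalent.

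Where you go beyond the paper is in flagging the rational-angle exception, and you are right to do so. The paper asserts $\sup_n\sin^2(n\phi)/\sin^2\phi=1/\sin^2\phi$ without qualification; as you observe, this fails when $\phi/\pi=p/q$ with $q$ odd. For instance at $\phi=\pi/3$ (i.e.\ $\alpha=\tfrac12$, which occurs at $\cos k=\theta/(1+\theta^2)\in I_\theta$) one has $U_{n-1}^2(\tfrac12)\in\{0,1\}$, so the true infimum is $1/(1+A)$, strictly larger than the value $(3/4)/(3/4+A)$ produced by the stated formula. Thus the Corollary as written is literally false on a countable subset of $I_\theta$, and the paper's proof shares the gap you identified. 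Your proposed resolution---interpret the displayed expression as the envelope $\zeta_\theta$ valid at generic $k$, rather than a pointwise identity---is the honest fix and is consistent with the paper's own remark immediately following the Corollary.
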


\begin{proof}
    In view of the explicit formula~\eqref{TnExplicit} for $T_n(\theta,k)$, it suffices, for each fixed $\theta\in(0,1)$ and $k\in [0,\pi)$, to find the supremum of $U^2_{n-1}(\cosh\kappa \cos k)$, where $\kappa$ satisfies $\theta = e^\kappa$. Assume that $\cosh\kappa\: |\cos k| <1$ and take $\phi\in(0,\pi)$ such that $\cosh\kappa\: \cos k = \cos\phi$. Then, by the defining property~\eqref{eq:ChebRel1} of the Chebyshev polynomials of second kind,
    \[
        \sup_{n\in\mathbb{N}} U^2_{n-1}(\cos\phi)=\sup_{n\in\mathbb{N}} \frac{\sin^2 (n\phi)}{\sin^2 \phi} = \frac1{\sin^2\phi} = \frac1{1 - \cos^2\phi},
    \]
    impying that
    \begin{align*}
        \inf_{n\in\mathbb{N}} T_n(\theta,k) &=  \bigl(1 + \sinh^2\kappa /(1 - \cosh^2\kappa\: \cos^2k)\bigr)^{-1} \\
            &= \frac{1 - \cosh^2\kappa\:\cos^2k}{\cosh^2\kappa - \cosh^2\kappa\: \cos^2k}
             = 1 - \frac{\sinh^2\kappa}{\cosh^2\kappa\: \sin^2 k}.
    \end{align*}
    If $\cosh\kappa |\cos k| \ge 1$, then $U^2_{n-1}(\cosh\kappa \cos k) \ge U^2_{n-1}(1) \to \infty$ as $n\to\infty$, showing that $T_n(\theta,k)$ decays to zero. The proof is complete.
\end{proof}

One can also prove that the curve  $\zeta_\theta(k)=\inf_{n\in\mathbb{N}} T_n(\theta,k)$ is the envelope of the family~$T_n(\theta,k)$ over the interval $I_\theta$, see Fig.~\ref{fig:envelop}.

Finally, we demonstrate that transmission prevails for $\theta$ close to~$1$; thus the $\delta'_\theta$-comb becomes completely penetrable as $\theta\to1$, see Fig.~\ref{FigThetaGoes1}.

\begin{cor}\label{cor:upper_bound}
    As $\theta$ tends to $1$, the functions $T_n(\theta,k)$ converge to $1$ uniformly in $k \in (0,\pi]$; moreover, the inequality
    \begin{equation*}
      1-T_n(\theta,k)\le 4n^2 (1-\theta)^2
    \end{equation*}
    holds for all $\theta$ close to $1$.
\end{cor}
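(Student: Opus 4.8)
The plan is to work directly from the explicit representation~\eqref{TnExplicit}. Writing $\theta=e^\kappa$ as in the paper, so that $\frac{(1-\theta^2)^2}{4\theta^2}=\sinh^2\kappa$ and $\frac{1+\theta^2}{2\theta}=\cosh\kappa$, formula~\eqref{TnExplicit} reads $T_n(\theta,k)=\bigl(1+\sinh^2\kappa\,U_{n-1}^2(\alpha)\bigr)^{-1}$ with $\alpha=\cosh\kappa\cos k$. Since $x/(1+x)\le x$ for $x\ge0$, this gives at once
\begin{equation*}
  1-T_n(\theta,k)\le \sinh^2\kappa\;U_{n-1}^2(\cosh\kappa\cos k),
\end{equation*}
so the whole argument reduces to controlling the right-hand side uniformly in $k$.

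First I would eliminate the $k$-dependence. As $k$ ranges over $(0,\pi]$, the argument $\alpha=\cosh\kappa\cos k$ ranges over $[-\cosh\kappa,\cosh\kappa)$, so $|\alpha|\le\cosh\kappa$. I claim the maximum of $U_{n-1}^2$ on $[-\cosh\kappa,\cosh\kappa]$ is attained at the endpoints. Indeed, $U_{n-1}^2$ is even; on $[-1,1]$ it is bounded by $n^2$ (the classical bound $|U_{n-1}|\le n$ there, with equality at $\pm1$), while on $[1,\cosh\kappa]$ the polynomial $U_{n-1}$ is positive and increasing, so $U_{n-1}^2$ rises up to $U_{n-1}^2(\cosh\kappa)\ge U_{n-1}^2(1)=n^2$. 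Hence $U_{n-1}^2(\alpha)\le U_{n-1}^2(\cosh\kappa)$ for all admissible $k$.

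The key simplification is the hyperbolic analogue of the defining relation~\eqref{eq:ChebRel1}: substituting $\phi=i\kappa$ gives $\sinh\kappa\,U_{n-1}(\cosh\kappa)=\sinh(n\kappa)$. Therefore $\sinh^2\kappa\,U_{n-1}^2(\cosh\kappa)=\sinh^2(n\kappa)$, and combining with the previous step yields the clean, $k$-independent estimate
\begin{equation*}
  1-T_n(\theta,k)\le \sinh^2(n\kappa),\qquad \kappa=\ln\theta.
\end{equation*}
Since $\sinh^2(n\kappa)\to0$ as $\theta\to1$ and the bound does not involve $k$, this already proves the uniform convergence $T_n(\theta,\,\cdot\,)\to1$ on $(0,\pi]$.

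It remains to convert this into the quantitative bound. I would rewrite $\sinh^2(n\kappa)=\frac{(1-\theta^{2n})^2}{4\theta^{2n}}$ using $e^{n\kappa}=\theta^n$, and factor $1-\theta^{2n}=(1-\theta)\sum_{j=0}^{2n-1}\theta^j\le 2n(1-\theta)$ for $\theta\in(0,1)$, since each of the $2n$ summands is at most $1$. This gives $\sinh^2(n\kappa)\le n^2(1-\theta)^2/\theta^{2n}$, and for $\theta$ close enough to $1$ that $\theta^{2n}\ge\frac14$ the factor $\theta^{-2n}$ is at most $4$, yielding $1-T_n(\theta,k)\le 4n^2(1-\theta)^2$. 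The only real obstacle is the endpoint-maximization of $U_{n-1}^2$ together with recognizing the hyperbolic identity $\sinh\kappa\,U_{n-1}(\cosh\kappa)=\sinh(n\kappa)$; once these are in place, the passage from $\sinh^2(n\kappa)$ to $4n^2(1-\theta)^2$ is routine.
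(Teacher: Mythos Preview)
Your proof is correct and follows a cleaner route than the paper's. The paper argues by continuity: since $\cosh\kappa\cos k\to\cos k$ as $\theta\to1$ and $|U_{n-1}|\le n$ on $[-1,1]$, one has $|U_{n-1}(\cosh\kappa\cos k)|\le 2n$ for $\theta$ sufficiently near $1$, and then combines this with $\sinh^2\kappa=\tfrac{(1-\theta^2)^2}{4\theta^2}\sim(1-\theta)^2$. You instead maximize $U_{n-1}^2$ \emph{exactly} on $[-\cosh\kappa,\cosh\kappa]$ at the endpoint and invoke the hyperbolic Chebyshev identity $\sinh\kappa\,U_{n-1}(\cosh\kappa)=\sinh(n\kappa)$, yielding the sharp $k$-free bound $1-T_n(\theta,k)\le\sinh^2(n\kappa)$ valid for \emph{all} $\theta>0$, not only near $1$; the passage to $4n^2(1-\theta)^2$ then comes with the explicit range $\theta^{2n}\ge\tfrac14$. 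The paper's argument is marginally quicker (no monotonicity of $U_{n-1}$ on $[1,\infty)$ to check), but yours gives a stronger intermediate inequality and an explicit threshold; the only small thing worth adding is a one-line remark that the case $\theta>1$ follows either by the symmetry $T_n(\theta,\cdot)=T_n(\theta^{-1},\cdot)$ from Theorem~\ref{Theorem1}(iii) or by the analogous mean-value estimate for $\theta^{2n}-1$.
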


\begin{proof}
  First, we note that
  \begin{equation*}
  \max_{|\phi|\le \pi}|U_{n}(\cos \phi)|=\max_{|\phi|\le \pi}\frac{|\sin(n+1)\phi|}{|\sin\phi|}=n+1.
  \end{equation*}
  Since $U_{n}\left(\tfrac{\theta^2+1}{2\theta} \cos k\right)\to U_{n}(\cos k)$ as $\theta\to 1$, we can estimate
  \begin{equation*}
    \left|U_{n}\left(\tfrac{\theta^2+1}{2\theta} \cos k\right)\right|\le 2(n+1)
  \end{equation*}
  for all $\theta$ close to $1$.
  Set $\beta_n(\theta,k)=\frac{(1-\theta^2)^2}{4\theta^2}\: U_{n-1}^2\left(\tfrac{\theta^2+1}{2\theta} \cos k\right)$. Then
  \begin{equation*}
   \beta_n(\theta,k)\le 4n^2 (1-\theta)^2,\qquad\text{as }\theta\to 1.
  \end{equation*}
 We conclude from this that
  \begin{equation*}
  1-T_n(\theta,k)=1-\frac{1}{1+\beta_n(\theta,k)}=\frac{\beta_n(\theta,k)}{1+\beta_n(\theta,k)}\le \beta_n(\theta,k)\le 4n^2 (1-\theta)^2,
\end{equation*}
which completes the proof.
\end{proof}


\section{Scattering from locally periodic smooth resonant potentials}\label{sec:example}



As shown above, the exactly solvable model Hamiltonians~$\cH$ with $\delta'_\theta$-comb potentials exhibit distinctive and useful transmission properties, making them valuable for understanding quantum filtering and tunnelling phenomena. Assume that~$V$ is a resonant potential with associated parameter $\theta$  as defined in~\eqref{Theta}. Then the family of scaled Hamiltonians~$\cH_\eps$ in~\eqref{Heps} converges, as $\eps\to0$, to the model Hamiltonian~$\cH$ of~\eqref{OperatorH0} defined by~\eqref{eq:cHdef}--\eqref{eq:cHinterface}, in the norm resolvent sense~\cite{GolovatyHryniv2010}. In this section, we pursue two goals: firstly, we construct, for every $\theta>0$, a compactly supported potential $V$ having a zero-energy resonance with this parameter; secondly, we prove that the transmission probability for $\cH_\eps$ converges as $\eps\to0$ to that of $\cH$ (cf.~\cite{GolovatyManko2009}). This demonstrates that the non-trivial band-limited behaviour of the Hamitonians with $\delta'_\theta$-comb potentials persists under realistic approximations.

\begin{lem}
  For any $\theta>0$, there exists a potential $V \in L^1(\Real)$ with compact support such that the Schr\"{o}dinger operator $-\frac{d^2}{dx^2}+V(x)$ has zero-energy resonance with a half-bound state $u$ satisfying 
  \begin{equation}\label{UforGivenTheta}
   \frac{u(+\infty)}{u(-\infty)}=\theta.
  \end{equation}
\end{lem}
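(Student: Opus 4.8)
The plan is to reduce the problem to a simple, explicitly solvable construction. Recall that a half-bound state is a bounded solution $u$ of $-u''+Vu=0$ on $\Real$; outside the support of $V$ the equation reads $u''=0$, so $u$ is affine on each unbounded component. For a bounded solution we need $u$ to be \emph{constant} on each of the two outer rays, say $u(x)=c_-$ for $x$ to the left of $\supp V$ and $u(x)=c_+$ to the right, so that $\theta=c_+/c_-$. Thus the whole task is to design a compactly supported $V\in L^1$ for which the equation admits a solution that is constant (nonzero) outside $\supp V$ with prescribed ratio of the two constant values.

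First I would place the support in $[0,1]$ and prescribe the \emph{shape} of $u$ rather than $V$: choose a twice-differentiable function $u$ on $[0,1]$ that matches the flat pieces at the endpoints, i.e. $u(0)=1$, $u'(0)=0$ on the left, and $u(1)=\theta$, $u'(1)=0$ on the right (the vanishing derivatives guarantee $C^1$-matching to the constant outer solutions, hence no boundary defects). Then I would simply \emph{define} the potential by $V:=u''/u$ on $[0,1]$ and $V:=0$ elsewhere. By construction $-u''+Vu=0$ holds identically, $u$ is bounded on $\Real$ with $u(+\infty)/u(-\infty)=\theta/1=\theta$, so \eqref{UforGivenTheta} holds. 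The remaining point is integrability: $V=u''/u\in L^1[0,1]$ is guaranteed as soon as $u$ is chosen smooth and \emph{bounded away from zero} on $[0,1]$, which is easy to arrange since both endpoint values $1$ and $\theta$ are positive.

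A concrete choice makes everything transparent: take, for instance,
\begin{equation*}
  u(x)=1+(\theta-1)\bigl(3x^2-2x^3\bigr)\qquad\text{on }[0,1],
\end{equation*}
the smooth-step interpolant, which satisfies $u(0)=1$, $u(1)=\theta$, $u'(0)=u'(1)=0$. For $\theta>0$ one checks that $u$ stays positive on $[0,1]$ (its minimum is $\min\{1,\theta\}>0$), so $V=u''/u$ is continuous, hence in $L^1$ with compact support. Setting $V\equiv0$ outside $[0,1]$ completes the construction, and $u$ extended by the constants $1$ and $\theta$ is the desired bounded half-bound state.

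\textbf{Main obstacle.} The only genuine subtlety is ensuring that $u$ does not vanish on its support, since a zero of $u$ would make $V=u''/u$ blow up and potentially fail integrability; this is exactly why I impose $\theta>0$ and connect the two positive endpoint values by a monotone, strictly positive interpolant. A secondary point to verify is that $u$ is genuinely a half-bound state in the paper's sense (bounded, nonzero limits at both infinities, not decaying), which is immediate from the constant outer pieces. I expect no difficulty beyond these bookkeeping checks.
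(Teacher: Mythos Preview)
Your proposal is correct and follows essentially the same approach as the paper: prescribe a positive $C^1$ function $u$ that is constant equal to $1$ on the left and $\theta$ on the right of a compact interval, then define $V:=u''/u$ on that interval and $V:=0$ elsewhere. The paper states this slightly more abstractly (on $[-1,1]$, with ``a smooth positive function'') without writing down a specific interpolant, whereas you supply the concrete smooth-step polynomial and spell out the positivity and integrability checks.
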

\begin{proof}
  Let $u$ be a smooth positive function on the line such that $u(x)=1$ for $x<-1$ and $u(x)=\theta$ for $x>1$ and set 
  \begin{equation}\label{VasU}
      V(x)=\frac{u''(x)}{u(x)}.
  \end{equation}
By construction, $u$ solves the equation 
\begin{equation}\label{EqnYV}
 -y''+V(x)y=0
\end{equation}
and is bounded on the line, so that $V$ is a resonant potential with half-bound state~$u$. In addition, the support of $V$ lies in $[-1,1]$ and \eqref{UforGivenTheta} holds.
\end{proof}

Let us consider the scattering problem 
\begin{equation}\label{SingleDipole}
  -y''+\frac1{\eps^2}\,V\left(\frac{x-x_0}{\eps}\right)y=k^2 y
\end{equation}
on a single dipole localized at the point $x=x_0$.

\begin{thm}\label{thm:TransMatrixV}
  Assume that $V$ is a resonant potential, $\supp V\subset [-1,1]$, and that $u$ is the corresponding half-bound state.  Then the transfer matrix of problem \eqref{SingleDipole} has the form
\begin{equation}\label{TransMatrixMeps}
  M_\eps(V; x_0,k)=\dfrac1{2\theta}\begin{bmatrix}
        (\theta^2+1+i\eps k \eta)\,e^{-2i\eps k} & (\theta^2-1-i\eps k \eta)\,e^{-2ikx_0} \\
        (\theta^2-1+i\eps k \eta)\,e^{2ikx_0} &   (\theta^2+1-i\eps k \eta)\,e^{2i\eps k}
  \end{bmatrix},
\end{equation}
where $\theta=u(1)/u(-1)$ and $\eta=\theta^2\int_{-1}^1 u^{-2}(x)\,dx$.
\end{thm}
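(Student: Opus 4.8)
The plan is to reduce the singularly scaled problem~\eqref{SingleDipole} to a fixed-support problem carrying a small spectral parameter, and then to read the transfer matrix off the Cauchy data of the solution at the two ends of $\supp V$. First I would rescale by $\xi=(x-x_0)/\eps$, $w(\xi)=y(x_0+\eps\xi)$; since $\supp V\subset[-1,1]$, the function $w$ solves $-w''+V(\xi)w=\eps^2k^2w$ on $[-1,1]$ and is a plane wave outside. Let $T=T(\eps^2k^2)$ be the interior transfer matrix carrying $(w,w')$ from $\xi=-1$ to $\xi=+1$; it is entire in $\eps^2k^2$ and satisfies $\det T=1$, the equation having no first-order term.

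Next I would express $M_\eps$ through $T$. Writing the exterior plane waves $e^{\pm ikx}$ in terms of Cauchy data gives the exact factorisation $M_\eps(V;x_0,k)=[S_\eps P_k E_k(x_0+\eps)]^{-1}\,T\,[S_\eps P_k E_k(x_0-\eps)]$, where $P_k=\left[\begin{smallmatrix}1&1\\ik&-ik\end{smallmatrix}\right]$ turns plane-wave amplitudes into $(y,y')$, $E_k(x)=\mathrm{diag}(e^{ikx},e^{-ikx})$ fixes the reference point, and $S_\eps=\mathrm{diag}(1,\eps)$ records $w'=\eps y'$. The two phase matrices are exactly what produce the geometric factors $e^{\pm2ikx_0}$ (position of the dipole) and $e^{\pm2i\eps k}$ (width of the support) in the stated matrix.

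The core is $T$ at zero energy, which supplies $\theta$ and $\eta$. At $\eps=0$ the interior equation is $-w''+Vw=0$, solved by the half-bound state $u$ (normalised by $u(-1)=1$); as $V$ vanishes outside $[-1,1]$ and $u$ is bounded, $u$ is constant on each ray, whence $u'(\pm1)=0$ and $u(\pm1)=u(\pm\infty)$. Thus $u$ gives the first column $(\theta,0)^\top$ of $T(0)$ with $\theta=u(1)/u(-1)$, and the reduction-of-order solution $v(\xi)=u(\xi)\int_{-1}^{\xi}u^{-2}(s)\,ds$, for which $v(-1)=0$ and $v'(\pm1)=u(\pm1)^{-1}$, gives the second column, so that $T(0)=\left[\begin{smallmatrix}\theta&\eta/\theta\\0&1/\theta\end{smallmatrix}\right]$ with $\eta=\theta^2\int_{-1}^1u^{-2}$. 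Conjugating $T(0)$ by the phase and scaling matrices and expanding to first order in $\eps$—where the $\eps^{-1}$ in $S_\eps^{-1}$ meets the off-diagonal entry $\eta/\theta$—produces the linear terms $\pm i\eps k\eta$; checking $\det M_\eps=1$ confirms the $SU(1,1)$-type structure and the leading limit $M_\eps\to M(\theta,x_0k)$.

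The main obstacle is the order bookkeeping forced by the non-unitary scaling $S_\eps$. Because $S_\eps^{-1}$ divides the lower row by $\eps$, the leading energy correction of the interior matrix enters $M_\eps$ already at first order: from the Wronskian identity $\theta\,T_{21}(\lambda)=-\lambda\int_{-1}^1 u\,c(\cdot,\lambda)$ for the solution $c$ with $c(-1)=1$, $c'(-1)=0$, one gets $T_{21}(\eps^2k^2)\sim-\eps^2k^2\theta^{-1}\int_{-1}^1u^2$, which after division by $\eps$ is of the same size $O(\eps)$ as the displayed $i\eps k\eta$ terms. Establishing the formula as stated therefore hinges on treating this interior energy correction correctly—equivalently, on referring the transfer matrix to the zero-energy interior data, which yields exactly the displayed matrix—and on verifying that any residual discrepancy is subleading in the sense that matters for the next step, namely the convergence $M_\eps\to M(\theta,x_0k)$ and hence of the transmission probabilities as $\eps\to0$.
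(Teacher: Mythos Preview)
Your approach coincides with the paper's: both take the zero-energy interior solutions $u$ and $v(x)=u(x)\int_{-1}^{x}u^{-2}$ (giving $T(0)=\bigl[\begin{smallmatrix}\theta&\eta/\theta\\0&1/\theta\end{smallmatrix}\bigr]$), match Cauchy data at $x=x_0\pm\eps$, and read off the displayed matrix; your factorisation $[S_\eps P_kE_k(x_0+\eps)]^{-1}\,T(0)\,[S_\eps P_kE_k(x_0-\eps)]$ is just a clean repackaging of the paper's hand computation with the ansatz $c_1u\bigl(\tfrac{x-x_0}{\eps}\bigr)+c_2v\bigl(\tfrac{x-x_0}{\eps}\bigr)$. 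One small correction: inserting $T(0)$ yields the stated matrix \emph{exactly}, with no first-order expansion needed.

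Your final paragraph is a legitimate observation that the paper glosses over. The genuine interior matrix is $T(\eps^2k^2)$, not $T(0)$, and since $T_{21}(\eps^2k^2)\sim-\eps^2k^2\theta^{-1}\int_{-1}^{1}u^2$ gets divided by $i\eps k$ under $S_\eps^{-1}$, the true transfer matrix of~\eqref{SingleDipole} differs from the displayed one by terms of the \emph{same} order $O(\eps k)$ as the $\pm i\eps k\eta$ already shown. The paper's proof uses the zero-energy ansatz inside the support without comment and presents~\eqref{TransMatrixMeps} as exact; in reality it is the zero-energy-interior transfer matrix, agreeing with the exact one only up to $O(\eps)$. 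Since the only downstream use is the bound $\|M_\eps-M(\theta,kx_0)\|\le c|k|\eps$ in the next theorem, this does not affect the conclusions, and your diagnosis of the ``obstacle'' is exactly right.
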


\begin{proof}
Since $\supp V \subset [-1,1]$, the half-bound state $u$ is constant outside the interval $(-1,1)$. Without loss of generality, we assume that
\begin{equation}\label{UatEnds}
    u(-1)=1,\qquad u(1)=\theta;
\end{equation}
then $u(\pm\infty) = u(\pm1)$, and $\theta$ is the parameter associated with the resonant potential~$V$ via~\eqref{Theta}. 

A solution of~\eqref{EqnYV} that is linearly independent of $u$ can be taken
\begin{equation}\label{SolutionV}
  v(x)=u(x)\int_{-1}^x \frac{dt}{u^2(t)}.
\end{equation}
Then $uv'-u'v=1$, so that on account of~\eqref{UatEnds} and the equalities $u'(\pm1)=0$, we conclude that  
\begin{equation}\label{VatEnds}
    v(-1)=0,\quad v'(-1)=1, \quad v(1)=\theta \int_{-1}^1u^{-2}(x)\,dx, \quad v'(1)=\theta^{-1}.
\end{equation}

We are looking for a solution of \eqref{SingleDipole} of the form
\begin{equation*}
  y(x)=
  \begin{cases}
    \alpha_1e^{ikx}+\alpha_2e^{-ikx}&\text{for }x<x_0-\eps,\\
    c_1u\left(\tfrac{x-x_0}{\eps}\right)+c_2 v\left(\tfrac{x-x_0}{\eps}\right)&\text{for }x_0-\eps<x<x_0+\eps,\\
    \beta_1e^{ikx}+\beta_2e^{-ikx}&\text{for }x>x_0+\eps.
  \end{cases}
\end{equation*}
The transfer matrix $M_\eps(V ; x_0, k)$ maps the vector $[\alpha_1,\alpha_2]^\top$ into $[\beta_1,\beta_2]^\top$, thus we need to express the constants $\beta_j$ in terms of $\alpha_j$. Since the solution $y$ must be continuously differentiable at $x=x_0-\eps$, the equalities $y(x_0 - \eps -0) = y(x_0 - \eps +0)$ and $y'(x_0 - \eps -0) = y'(x_0 - \eps +0)$ yield
\begin{equation*}
  \begin{bmatrix}
      e^{ik(x_0-\eps)} & e^{-ik(x_0-\eps)} \\ ik e^{ik(x_0-\eps)} & - ik e^{-ik(x_0-\eps)}
  \end{bmatrix}
  \begin{bmatrix}
      \alpha_1 \\ \alpha_2
  \end{bmatrix}
    = 
   \begin{bmatrix}
       u(-1) & v(-1) \\ \eps^{-1} u'(-1) & \eps^{-1} v'(-1)
   \end{bmatrix} 
   \begin{bmatrix}
      c_1 \\ c_2
  \end{bmatrix};
\end{equation*}
similar continuity conditions at $x = x_0 + \eps$ produce
\begin{equation*}
  \begin{bmatrix}
       u(1) & v(1) \\ \eps^{-1} u'(1) & \eps^{-1} v'(1)
   \end{bmatrix} 
   \begin{bmatrix}
      c_1 \\ c_2
  \end{bmatrix}
  =
  \begin{bmatrix}
      e^{ik(x_0+\eps)} & e^{-ik(x_0+\eps)} \\ ik e^{ik(x_0+\eps)} & - ik e^{-ik(x_0+\eps)}
  \end{bmatrix}
  \begin{bmatrix}
      \beta_1 \\ \beta_2
  \end{bmatrix}.
\end{equation*}
Using the values of $u$ and $v$ at the points $\pm1$ listed in \eqref{UatEnds} and \eqref{VatEnds} and introducing $\eta$ as suggested, we conclude that 
\begin{equation*}
  \begin{bmatrix}
      e^{ik(x_0+\eps)} & e^{-ik(x_0+\eps)} \\  e^{ik(x_0+\eps)} & - e^{-ik(x_0+\eps)}
  \end{bmatrix}
  \begin{bmatrix}
      \beta_1 \\ \beta_2
  \end{bmatrix}
  =
  \begin{bmatrix}
    \theta & ik\eps\eta/\theta \\ 0 & 1/\theta
  \end{bmatrix}
  \begin{bmatrix}
      e^{ik(x_0-\eps)} & e^{-ik(x_0-\eps)} \\  e^{ik(x_0-\eps)} & -  e^{-ik(x_0-\eps)}
  \end{bmatrix}
  \begin{bmatrix}
      \alpha_1 \\ \alpha_2
  \end{bmatrix}.
\end{equation*}
Solving for the vector $[\beta_1,\beta_2]^\top$, we derive the transfer matrix $M_\eps(V ; x_0, k)$ as in~\eqref{TransMatrixMeps}.
\end{proof}

\begin{thm}
  Let $T_{n,\eps}(V;k)=|t_{n,\eps}(V;k)|^2$ be the probability of a particle tunneling through the array of localized dipoles
\begin{equation}\label{PotentialVeps}
   \frac1{\eps^2}\sum_{j=0}^{n-1}V\left(\frac{x-j}\eps\right),
\end{equation}
with potential $V$ as in Theorem~\ref{thm:TransMatrixV}.  Then $T_{n,\eps}(V;k)$ converges as $\eps\to 0$ to the transmission probability $T_n(\theta,k)$ for the $\delta_\theta'$-comb
\begin{equation*}
  \sum_{j=0}^{n-1}\delta'_{\theta}(x-j).
\end{equation*}
\end{thm}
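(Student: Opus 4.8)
The plan is to establish convergence at the level of transfer matrices and then pass to the transmission probability, which is a continuous function of the matrix entries. The key observation is that both the regularized problem \eqref{PotentialVeps} and the ideal $\delta'_\theta$-comb admit transfer-matrix descriptions: by the composition rule for transfer matrices, the full transfer matrix $M_{n,\eps}(V;k)$ for the array of dipoles factors as a product $M_\eps(V;(n-1),k)\cdots M_\eps(V;1,k)\,M_\eps(V;0,k)$ of the single-dipole transfer matrices from Theorem~\ref{thm:TransMatrixV}, while the ideal comb has transfer matrix $M_n(\theta,k)=M(\theta,(n-1)k)\cdots M(\theta,k)\,M(\theta,0)$ as in~\eqref{TransferMatrixN}.

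First I would compare the two sets of factors. From formula~\eqref{TransMatrixMeps}, the single-dipole matrix is
\begin{equation*}
  M_\eps(V;x_0,k)=\frac1{2\theta}\begin{bmatrix}
        (\theta^2+1+i\eps k \eta)\,e^{-2i\eps k} & (\theta^2-1-i\eps k \eta)\,e^{-2ikx_0} \\
        (\theta^2-1+i\eps k \eta)\,e^{2ikx_0} &   (\theta^2+1-i\eps k \eta)\,e^{2i\eps k}
  \end{bmatrix},
\end{equation*}
and setting $x_0=j$ and letting $\eps\to0$ with $k$ fixed, each exponential $e^{\pm 2i\eps k}\to1$ and each $i\eps k\eta$ term vanishes, so that $M_\eps(V;j,k)\to M(\theta,jk)$, the $j$-th factor of~\eqref{TransferMatrixN}. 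Thus the limit of the $\eps$-dependent factors is exactly the corresponding ideal factor, interaction by interaction.

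Next I would upgrade this pointwise convergence of the individual factors to convergence of the finite product. Since the product has a fixed number $n$ of factors, continuity of matrix multiplication gives $M_{n,\eps}(V;k)\to M_n(\theta,k)$ as $\eps\to0$ for each fixed $k$; the convergence is in fact uniform on compact $k$-intervals because the entries of $M_\eps(V;j,k)$ depend smoothly and boundedly on $(\eps,k)$ and differ from those of $M(\theta,jk)$ by terms that are $O(\eps)$ uniformly for $k$ in a bounded set. Finally, the transmission amplitude is recovered from the $(2,2)$-entry via $t_{n,\eps}(V;k)=1/[M_{n,\eps}(V;k)]_{22}$, exactly as in~\eqref{TransferMatrixRepr}; since this entry converges to $1/t_n(\theta,k)$ and is bounded away from zero (the limit $t_n(\theta,k)$ never vanishes, as noted after~\eqref{TransferMatrixRepr}), continuity of $z\mapsto|1/z|^2$ yields $T_{n,\eps}(V;k)=|t_{n,\eps}(V;k)|^2\to|t_n(\theta,k)|^2=T_n(\theta,k)$.

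The main obstacle, and the only genuinely delicate point, is justifying that the single-dipole transfer matrix $M_\eps(V;j,k)$ really converges to $M(\theta,jk)$ and not to some $\eps$-independent modification carrying extra data from the shape of $V$. This is precisely where the resonance hypothesis enters: Theorem~\ref{thm:TransMatrixV} already encodes the correct limiting structure, with the parameter $\theta=u(1)/u(-1)$ and the genuinely $V$-dependent constant $\eta=\theta^2\int_{-1}^1 u^{-2}\,dx$ appearing only in the $O(\eps)$ corrections. Hence the role of resonance is to guarantee that the half-bound state $u$ is bounded, so that $\eta$ is finite and the correction terms vanish in the limit; once Theorem~\ref{thm:TransMatrixV} is in hand, the remaining argument is the routine continuity-of-products computation sketched above.
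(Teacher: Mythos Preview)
Your proposal is correct and follows essentially the same route as the paper: factor-wise convergence $M_\eps(V;j,k)\to M(\theta,jk)$ from the explicit formula~\eqref{TransMatrixMeps}, then convergence of the finite product $M_{n,\eps}(V;k)\to M_n(\theta,k)$, and finally passage to $T_{n,\eps}\to T_n$ via the $(2,2)$-entry. The paper makes the first step quantitative, recording the bound $\|M_\eps(V;x_0,k)-M(\theta,kx_0)\|\le c_1|k|\eps$ and hence $\|M_{n,\eps}(V;k)-M_n(\theta,k)\|\le c_2 n|k|\eps$, but this is exactly the $O(\eps)$ estimate you allude to, so the two arguments coincide.
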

\begin{proof}
Comparing the transfer matrices in \eqref{TransMatrixMeps} and \eqref{TransferMatrix1}, we see that 
  \begin{equation}\label{MeMeps}
    \| M_\eps(V; x_0, k)-M(\theta,kx_0)\|\le c_1|k|\eps
  \end{equation}
with a constant $c_1$ independent of~$k$. 
The transfer matrix for the potential~$V_\eps$ is equal to
\begin{equation}\label{MneMnEps}
  M_{n,\eps}(V;k)=M_\eps(V; n-1,k)\cdots M_\eps(V; 1,k)M_\eps(V; 0,k),
\end{equation}
and using \eqref{MeMeps} in \eqref{TransferMatrixN} and \eqref{MneMnEps}, one concludes that
  \begin{equation*}
    \| M_{n,\eps}(V;k)-M_n(\theta,k)\|\le c_2 n |k|\eps.
  \end{equation*}
Therefore, $|t^{-1}_{n,\eps}(V;k)-t_n^{-1}(\theta,k)|\le c_2 n |k|\eps$, so that, for every fixed $k\in\Real$,
\begin{equation*}
    T_{n,\eps}(V;k)\to T_n(\theta,k),
\end{equation*}
as $\eps\to 0$. The proof is complete. 
\end{proof}

Finally, we present an explicit example of a potential~\eqref{PotentialVeps} with a suitable resonant~$V$. 
Consider the potential $V_\theta$ defined via
  \begin{equation*}
    V_\theta(x)= \frac{48x^2+6(1-\theta)x-16}{(x+1)^2\big(4x^2-(\theta+7)x+2(\theta+1)\big)+4}, \quad |x|\le 1,
  \end{equation*}
and $V_\theta(x)=0$ for $|x| > 1$. Each potential in this family with $\theta\ne0$ is resonant and has a half-bound states given by
\begin{equation*}
  u_\theta(x)=
  \begin{cases}
    \hskip1cm1& \text{for }x<-1,\\
    \frac14(x+1)^2\big(4x^2-(\theta+7)x+2(\theta+1)\big)+1& \text{for }|x|\le 1,\\
    \hskip1cm\theta &\text{for }x>1.
  \end{cases}
\end{equation*}
For each non-zero $\theta$, one can determine the corresponding parameter $\eta$ as described in Theorem~\ref{thm:TransMatrixV}; for instance, when $\theta = 0.2$, we find $\eta = 0.21295(7)$. Using the matrix $ M_\eps(V_\theta; x_0,k)$ in~\eqref{TransMatrixMeps}, we compute the transmission probability $T_{n,\eps}(V_\theta;k)$. As shown in Figures~\ref{fig:TepsN=2}--\ref{fig:TepsN=4}, the transmission probability exhibits a pronounced peak structure, consistent with the predictions of the exactly solvable model. Even for moderately small $\eps$, the potentials \eqref{PotentialVeps} with resonant $V_\theta$ demonstrate clear band-pass filtering properties.


\begin{figure}[!ht]
  \centering
  \includegraphics[scale=.5]{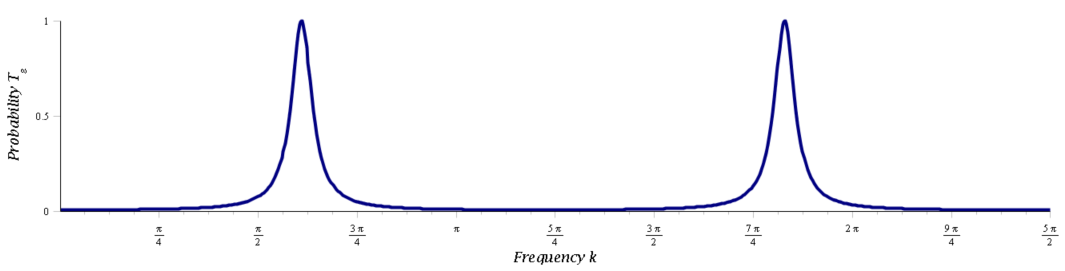}\\
  \caption{Plot of $T_{2,\eps}(V_\theta;k)$ for $\theta=0.2$ and $\eps=0.1$.}\label{fig:TepsN=2}
\end{figure}



\begin{figure}[!ht]
  \centering
  \includegraphics[scale=.5]{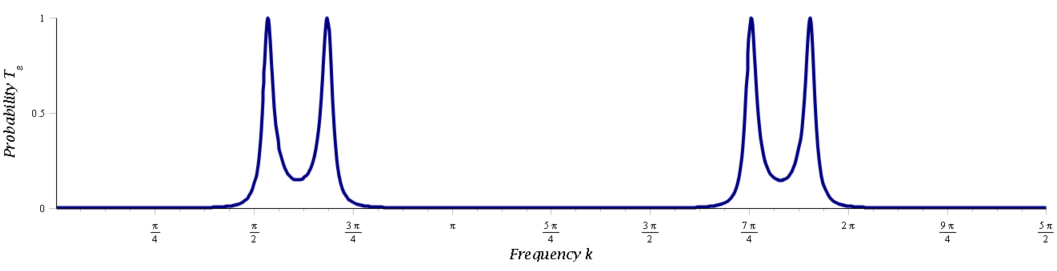}\\
  \caption{Plot of $T_{3,\eps}(V_\theta;k)$ for $\theta=0.2$ and $\eps=0.1$.}\label{fig:TepsN=3}
\end{figure}



\begin{figure}[!ht]
  \centering
  \includegraphics[scale=.5]{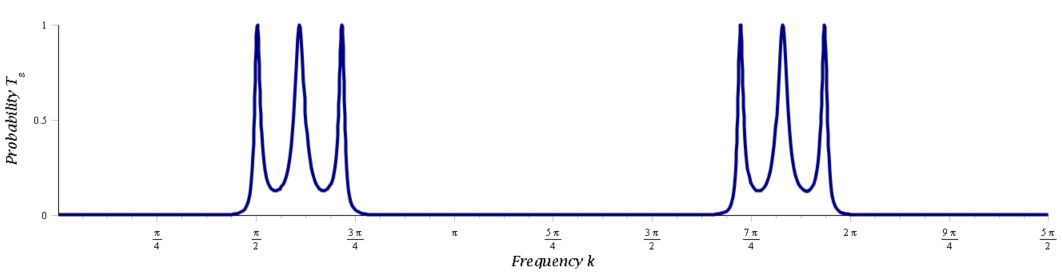}\\
  \caption{Plot of $T_{4,\eps}(V_\theta;k)$ for $\theta=0.2$ and $\eps=0.1$.}\label{fig:TepsN=4}
\end{figure}



\end{document}